\documentclass[reqno, 11pt, a4paper]{amsart}

%
% loaded packages
%
\usepackage{
    a4wide,
    amssymb,
    bbm,
    centernot,
    mathtools,
} 
\usepackage{graphicx}
\graphicspath{{./images/}}
\usepackage[cal=euler, scr=boondoxo]{mathalfa}
\usepackage[colorlinks=true, linkcolor=blue,citecolor=blue]{hyperref}
\usepackage{float}

\usepackage{graphicx}
\usepackage{amssymb}
\usepackage{amsthm}
\usepackage{tikz}

\usepackage{centernot}
\usepackage{mathtools}
\usepackage{ stmaryrd }

\usepackage{hyperref}
\hypersetup{allcolors=black,
pdftitle={NV Scheme to SLE},
    pdfpagemode=FullScreen}
\numberwithin{equation}{section}
\usepackage{bbm}

\usepackage{braket}
\usepackage{amsmath,amsthm,amssymb}
\usepackage{physics}
\usepackage{mathtools}
\usepackage{bm}
\usepackage{esvect}
\usepackage[english]{babel}
\usepackage{extarrows}

\usepackage{centernot}
\usepackage{mathtools}
\usepackage{ stmaryrd }
\usepackage{svg}
\usepackage{hyperref}
\hypersetup{allcolors=black,
pdftitle={Multiple SLE and Perturbations},
    pdfpagemode=FullScreen}

\usepackage{setspace}
\linespread{1.35}

\usepackage{amsmath}

\numberwithin{equation}{section}
\usepackage{bbm}

\usepackage{braket}
\usepackage{amsmath,amsthm,amssymb}
\usepackage{physics}
\usepackage{mathtools}
\usepackage{bm}
\usepackage{esvect}
\usepackage[english]{babel}

%
% definition of additional environments
%
\newtheorem{theorem}{Theorem}[section]
\newtheorem{lemma}[theorem]{Lemma}
\newtheorem{proposition}[theorem]{Proposition}

\newtheorem{corollary}[theorem]{Corollary}

\theoremstyle{definition}
\newtheorem{definition}[theorem]{Definition}

\theoremstyle{remark}
\newtheorem{remark}[theorem]{Remark}

%
% Additional options
%
\numberwithin{equation}{section}

%
% Loading the file commands.tex containing shorthand for symbols
%

%
% Special symbols
%

%\newcommand{\var}{\mathrm{Var}}

%\newcommand{\tr}{\mathrm{tr}}

%
% Calligraphic letters
%

%
% Blackboard letters
%

%
% Frakur letters
%

%\newcommand{\frq}{\ensuremath{\mathfrak q}}

%
% Bold letters
%

%
% Bold Symbol letters
%

%
% tilde
%

%
% Scr symbols
%

\begin{document}

\definecolor{airforceblue}{RGB}{204, 0, 102}
\newenvironment{draft}
  {\par\medskip
  \color{airforceblue}%
  \medskip}

\title[Perturbations of multiple Schramm-Loewner evolution]{Perturbations of multiple Schramm-Loewner evolution with two non-colliding Dyson Brownian motions}

%   authors

\author{Jiaming Chen}
\address{Departement Mathematik, ETH Zürich}
\curraddr{101, Rämistrasse, CH-8092 Zürich, Switzerland}
\email{jiamchen@student.ethz.ch}
\thanks{}

\author{Vlad Margarint}
\address{NYU-ECNU Institute of Mathematical Sciences, NYU Shanghai}
\curraddr{1555 Century Avenue, Shanghai, China}
\email{margarint@nyu.edu}
\thanks{}

\begin{abstract}
In this article we study multiple $SLE_\kappa$, for $\kappa\in(0,4]$, driven by Dyson Brownian motions. This model was introduced in the unit disk by Cardy \cite{cardy2003} in connection with the Calogero-Sutherland model. We prove the Carath\'eodory convergence of perturbed Loewner chains under different initial conditions and under different diffusivity $\kappa \in (0,4]$ for the case of $N=2$ driving forces. Our proofs use the analysis of Bessel processes and estimates on Loewner differential equation with multiple driving forces. In the last section, we estimate the Hausdorff distance of the hulls under perturbations of the driving forces, with assumptions on the modulus of the derivative of the multiple Loewner maps.
\end{abstract}

\subjclass[2020]{60J67, 60K35, 30C20}
\keywords{Schramm-Loewner evolution, Dyson Brownian motion, Bessel process}
\dedicatory{}
\maketitle

%Comment out if you don't want a table of contents:
\tableofcontents

\section{Introduction}
\label{sec:intro}
The forward multiple Loewner chain encodes the dynamics of a family of conformal maps $g_t(z)$ defined on simply connected domains $\mathbb{H}\backslash K_t$ of the upper-half plane $\mathbb{H}$, where $K_t$ are growing hulls (\cite{Antti}, $Sec.\;4.1.2$) in the sense that $K_s\subseteq K_t$ for all $0\leq s\leq t$. In this work we study a Loewner chain generated by $N\in\mathbb{N}$ real continuous driving forces $\{\lambda_1(t),\lambda_2(t),\ldots,\lambda_N(t)\}$.  Denote by $\lambda_j:\mathbb{R}\to\mathbb{R}$ these driving functions, we have 
\begin{equation}
\label{1.1}
    \partial_tg_t(z)=\frac{1}{N}\sum\limits_{j=1}^N\frac{2}{g_t(z)-\lambda_j(t)}
\end{equation}
with $g_0(z)=z$.
This work is motivated by \cite{cardy2003}, where the author establishes a connection between the Calogero-Sutherland model and the multiple $SLE_\kappa$ driven by Dyson Brownian motions. This model was studied in more details in \cite{VHealey}. In this paper, we treat further the multiple $SLE_\kappa$ driven by Dyson Brownian motions. In order to define this object, we consider the Weyl chamber (\cite{Grabiner}, $Sec.\;4.$) given by
\begin{equation*}
    \mathcal{M}_N\coloneqq\big\{\mathbf{x}\in\mathbb{R}^N;\;x_1<x_2<\cdots<x_N\big\}.
\end{equation*}
Throughout the paper we work with $\big(\Omega, \mathcal{F},\mathbb{P}\big)$, the standard probability space. Let $B_j(t)$, $j=1,\ldots,N$ be one-dimensional standard independent Brownian motions defined on this space. The Dyson Brownian motions with diffusivity parameter $\kappa \in (0,4]$ are defined by a set of differential equations in the following
\begin{equation}
\label{1.3}
    d\lambda_j(t)=\frac{1}{\sqrt{2}}dB_j(t)+\frac{2}{\kappa}\sum\limits_{1\leq k\leq N,j\neq k}\frac{dt}{\lambda_j(t)-\lambda_k(t)}
\end{equation}
with $(\lambda_1(0),\ldots\lambda_N(0))\in\mathcal{M}_N$, for all $t\in\mathbb{R}_+$ and $j=1,\ldots,N$. We refer the reader to \cite{Dys} for more details.
In the literature \cite{Katorihydro} on Dyson Brownian motions, the parameter of the system is usually denoted by $\beta=\frac{8}{\kappa}>0$, where $\kappa$ is the diffusivity parameter in $(1.3)$. The Dyson Brownian motions have a unique strong solution (\cite{YaueErdodynamical}, $Thm.\;12.2$) that we use as the simultaneous driving force for our multiple $SLE_\kappa$.
An intuitive picture is that $\{\lambda_1(t),\lambda_2(t),\ldots,\lambda_N(t)\}$ describes an ensemble of diffusing particles (\cite{MARTIN}, $Rmk.\;2.4$) in which they repel each other via a Coulomb force. It is known that when $\kappa\in(0,4]$, no two Dyson Brownian particles will collide (\textit{i.e.} touch $\partial\mathcal{M}_N$) almost surely. Actually, these particles do not collide for all $\kappa\in(0,8]$. To be precise, denote by
\begin{equation}
\label{1.4}
    \tau_N\coloneqq\inf\{0\leq t\leq T;\;\exists~i,j\; s.t.\; \abs{\lambda_i(t)-\lambda_j(t)}=0\}.
\end{equation}
Then $\tau_N=\infty$ almost surely as in (\cite{kkk}, $Prop.\;3.1$). This result justifies our choice of an arbitrary compact time interval $[0,T]$. Meanwhile, it is known (\cite{Makoto}, $Thm.\;1.3$) that when $\kappa\in (0,4]$, the transformations $g_t(z)$ map a simply connected domain $\mathbb{H}\backslash K_t$ conformally onto the upper-half plane $\mathbb{H}$, where $K_t$ consists of the image of $N$ non-intersecting simple curves. In other words, the map $g_t: \mathbb{H}\setminus \cup_{j=1}^n\gamma_t^j \to \mathbb{H}$ is a conformal isomorphism. Each curve corresponds to a driving force $\lambda_j(t)$, $ j \in \mathbb{N}$. We focus on this case and throughout this article we assume $\kappa \in (0,4]$.\par

 In the last years, there have been many results on the multiple $SLE$ model (see \cite{katori2019}, \cite{Vik}, \cite{peltola2019global}, \cite{HotaScheis}, \cite{delMonacoSchleis}, \cite{Katoriyoshida}, \cite{dubedat}, \cite{BeffPeltoWu}, \cite{Karrila1}, \cite{MonHottaScheiss}, \cite{Katorihydro}, \cite{Kytolla}, \cite{Zhan2curves}, \cite{Zhan2curves2}, \cite{Roth}) for a non-exhaustive list of papers where the model is studied in the upper half-plane, unit disk, either in the simultaneous growth case, or in the non-simultaneous growth case. 
 
 %There is also literature on the connection between multiple $SLE$ and Conformal Field Theory (CFT). We refer to \cite{Vik}, \cite{Kazu}.  

In this paper, we mainly focus on how the forward Loewner chain $g_t(z)$ behaves when the system is under different perturbations. We propose an estimate of such perturbations in the sense of Carathéodory convergence in the following sections. We are interested in what happens when either the initial value of the driving forces $\lambda_j(t)'s$ or their diffusivity parameter $\kappa$ are perturbed in the $N=2$ driving forces case. In our work, we study the random driving forces case with constant weights. We use probabilistic properties of the driving forces whereas the deterministic intermediate results are performed for general $N$ case, compared with \cite{Roth} where the analysis is detailed in the $N=2$ case with non-constant weights (under assumptions) using Gr\"onwall's inequality.

%The analysis in this paper can be thought as a first-step towards the general $N$ case, and asymptotic $N \to \infty$, where there are techniques that involve the study of local statistical properties such as the study of the gaps between particles, and other local statistics developed in random matrix theory (see \cite{BenArous}, \cite{Landon}, \cite{Bourgade},  \cite{Schlein}, \cite{YaueErdodynamical}, for a non-exhaustive list). We plan to investigate this in the future. For more works in random matrix theory on other models, we refer the reader to the non-exhaustive list \cite{BA2}, \cite{Antti1}, \cite{Antti2}, \cite{Antti3}, \cite{Antti5}, \cite{Antti6}, \cite{eu1}.

The paper contains three parts. In Section $2$, we introduce the Carathéodory convergence of multiple Loewner chains and obtain some preliminary estimates. In Section $3$, we analyze two types of perturbations of the multiple $SLE_\kappa$. The first type of perturbation is on the initial value of driving forces; the second type of perturbation is on the diffusivity parameter $\kappa \in (0.4]$. The first situation has an  application in Statistical Physics and refers to a natural question when people consider the stability of statistical physics models under boundary perturbations, such as multiple Ising model interfaces when $\kappa=3$.
%The first type perturbation can be thought as an initial step in the study of the large $N$ driving forces with techniques that were developed for the proof of the universality of certain random matrices ensembles in random matrix theory. In that context, Dyson Brownian motion appears as a tool in the proof of the universality using the 'three steps strategy', see \cite{YaueErdodynamical} for a detailed exposition of these methods. For a different approach we refer the reader to \cite{Tao}.
The second situation is also very natural as seen from the existing $SLE_\kappa$ literature on the topic. For example, this was studied in the sup-norm in a sequence of papers \cite{Vikcont}, \cite{YuanFriz}, \cite{me1} in the one-curve case for the continuity in the parameter $\kappa \leq 4 $ of the welding homeomorphism.
In Section $4$, we perform a variant estimate of the Hausdorff distance between the perturbed hulls under the backward multiple Loewner chain.

The study of the multiple $SLE_\kappa$ with Dyson Brownian driving forces has many avenues to be explored in the following years. Motivated by the Random Matrix Theory (RMT) for the cases $\beta=1$ (Gaussian Orthogonal Ensemble), $\beta=2$ (Gaussian Unitary Ensemble), and beyond for general values of $\beta>0$, there are many tools currently available in the study of the Dyson Brownian motions.

%{\color{blue}Due} to many tools {\color{blue}currently} available in the study of the Dyson {\color{blue}Brownian motion} motivated by the Random Matrix Theory for the cases $\beta=1$ (Gaussian Orthogonal Ensemble), $\beta=2$ (Gaussian Unitary ensemble), and beyond for general values of $\beta>0$.

Additionally, there are links with CFT (Conformal Field Theory) as explained in the original paper of Cardy \cite{cardy2003} where the model has been studied. For more works on the connection
between multiple SLE and CFT we refer to \cite{Vik} and \cite{Kazu}.

The case $N=2$ Dyson particles is a first step in this direction in which one can analyse the corresponding model using the analysis of the Bessel process which has already been well studied in the literature. The distance between the two particles and the statistics of the extremal driving force can serve as a model for the analysis of the general $N$ and asymptotic $N\to\infty$ cases in which the statistics of distances are being studied in the context of RMT and beyond, in particular, in the case of Dyson Brownian motion and Airy process which models the extremal eigenvalue for general $\beta$ (see \cite{Zeitouni}). Moreover, for general $N$ case, the joint density of the Dyson particles is known (see \cite{Zeitouni}), which then motivates a study of the situation where the Dyson Brownian motion starts from the equilibrium. In the $N=2$ case, the results can be further refined with a thorough analysis of the Bessel process, for which there exists ample literature on known results. Through adaptation and following a similar method of the proof to Lemma 6.15 in \cite{Antti} for $N=2$ curves, we can obtain probabilistic properties of the curves via estimates on the statistics of the driving forces in this setting.

In general, given the structure at the level of driving forces (for $N=2$ and for general $N$), one can introduce new observables such as the statistics of the $k^{th}$ largest distance between driving forces or the probability to have no driving forces in a symmetric region about the origin (see \cite{noeigen} for $\beta=2$), etc. And such new observables are adopted in order to use them in the study of the scaling limits of discrete models (see \cite{Nitzschner}, \cite{DJian}). The variety of new observables follow naturally from the structure of the driving forces. In the $N=2$ case, the analysis simplifies due to Bessel process techniques that  are available in the existing literature on stochastic analysis. This can serve as a good platform for more general cases. The analysis in this paper can be thought as a first-step towards the general $N$ case, and asymptotic $N\to\infty$, where there are techniques that involve the study of local statistical properties such as the study of the gaps between particles, and other local statistics developed in RMT (see \cite{BenArous}, \cite{Schlein}, \cite{Landon} for a non-exhaustive list).
%------------------------------------------
%------------------------------------------

\section{Carathéodory convergence of Loewner chains}
\label{sec:chapter 1}
Throughout this paper, we use $\norm{\cdot}_{[0,T]}$ for the uniform norm on the interval $[0,T]$, and denote by $\norm{\cdot}_{[0,T]\times G}$ the uniform norm on the product space $[0,T]\times G$, where $G\subseteq\mathbb{H}$ is compact. Also, throughout the paper we consider the coupling of the forward Loewner chains in the sense that they are driven by Dyson Brownian motions.\par
In this section, we propose an estimate to the perturbation of forward Loewner chain $g_t(z)$ in the sense of Carathéodory convergence. The central idea is the uniform convergence on compact sets. This type of convergence is emphasized in complex analysis (\cite{rudin}, $Thm.\;10.28$), where when a sequence of holomorphic functions Carathéodory converges to a limit function, then taking the limit preserves the holomorphicity.\par

% In the discussion of forward stochastic Loewner chains $g_t(z):\mathbb{H}\backslash K_t\to\mathbb{H}$, the hulls $K_t$ evolve stochastically. Hence it would be meaningless to take a fixed compact subset in $\mathbb{H}\backslash K_t$ and discuss its probabilistic properties, because we do not know whether the slit $K_t$ will intersect compact set $G$ with positive probability and the Loewner chain $g_t(z)$ explodes there.\par

One can study the Carath\'eodory convergence in the setting when $G$ is an arbitrary compact subset of $\mathbb{H}$. Indeed, if we want a Carathéodory estimate where compact sets $G$ could run freely over the domain $\mathbb{H}\backslash K_t$, $t\in[0,T]$, then the results must be discussed pathwise.
%the result in ([7] $Lem.\;3.1$) tells us that $\text{height}\;K_t\leq2\sqrt{t}$ for all $t\in[0,T]$. Hence we could simply subtract the box $\mathbb{R}\times[0,iT]$, which we denote by $K_0$, from $\mathbb{H}$. We know that $K_t\subset K_0$ for all $t\in[0,T]$ almost surely. And then one can restrict $g_t(z)$ to the simply connected domain $\mathbb{H}\backslash K_0$. This methodology gives a uniform bound to perturbation of forward Loewner chains. But 
\begin{definition}
    Denote by $D\subseteq\mathbb{H}$ a simply connected domain. Let $f_n(t,z):[0,T]\times D\to\mathbb{H}$ be a sequence of conformal maps, and let $f(t,z):[0,T]\times D\to\mathbb{H}$ be a conformal map. We say $f_n$ converges in the Carathéodory sense to $f$, or $f_n\xRightarrow{Cara}f$, if for each compact $G\subseteq D$, the sequence $(f_n)_{n\in\mathbb{N}}$ converges to $f$ uniformly on $[0,T]\times G$.
\end{definition}
%Besides the Carathéodory convergence of conformal maps, there is another type of Carathéodory convergence with respect to simply connected domains properly contained in $\mathbb{C}$. Such convergence of sets is not our focus in this paper. Nonetheless, it indicates another estimate to the perturbative multiple Loewner chains. In the following definition, denote by $\mathbb{D}$ the unit disc in complex plane.
%\begin{definition}
%    Given a sequence of simply connected domains $(D_n)_{n_\in\mathbb{N}}$ properly contained in $\mathbb{C}$ such that there exists $z_0$ with $z_0\in\bigcap_nD_n$. we say that $(D_n)_{n_\in\mathbb{N}}$ converges in the Carathéodory sense, or $D_n\xRightarrow{Cara}\cdot$, if there exists conformal onto maps $\varphi_n:\mathbb{D}\to D_n$ satisfying $\varphi_n(0)=z_0$, $\varphi_n^\prime(0)>0$ and the sequence $(\varphi_n)_{n\in\mathbb{N}}$ converges uniformly on compact subsets of $\mathbb{D}$.
%\end{definition}
The estimate on the $g_t(z):\mathbb{H}\backslash K_t\to\mathbb{H}$ corresponding to different perturbations is based on Definition $2.1$. For notational convenience, we will write $g_t(z)$ as $g(t,z)$ from now on. Consider two forward Loewner chains $g_1(t,z)$ and $g_2(t,z)$ defined on $[0,T]\times\mathbb{H}\backslash K_t$, where $K_T$ is the union of the hulls of $g_1(t,z)$ and $g_2(t,z)$ up to time $T$, that is $K_T=\cup_{k=1}^2K_T^k$. Suppose they are generated $resp.$ by $N$ continuous driving forces $\{V_{k,1}(t),\ldots,V_{k,N}(t)\}$ with $k=1,2$. Then we have the following estimate.

\begin{figure}[htbp]
\centering
\includegraphics[scale=0.34]{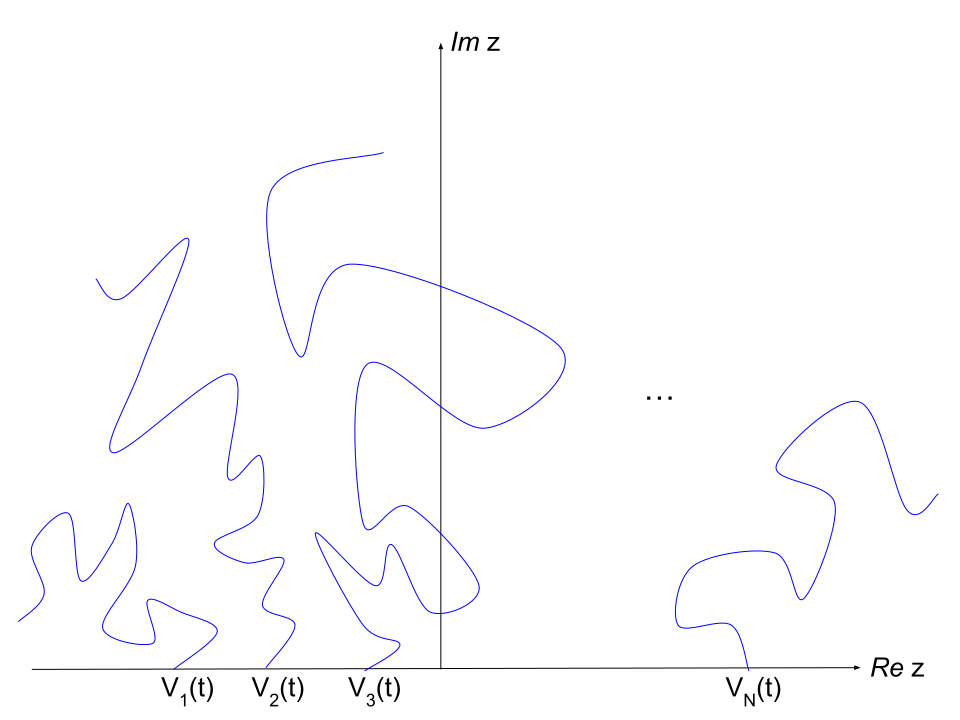}
\caption{Schematic picture of simultaneously growing N-tuples of the non-intersecting slits in $\mathbb{H}$ at time $t\in[0,T]$. See another nice illustration in \cite{Katorihydro}.}
\end{figure}

In the above illustration we have dropped the $k$-indices and omitted the perturbation, for convenience of the reader.

\begin{proposition}
    For an arbitrary compact $G\subseteq\mathbb{H}\backslash K_T$, for all $z \in G$, there exists a constant $C(T,G)>0$ such that we have
    \begin{equation*}
        \norm{g_1(t,z)-g_2(t,z)}_{[0,T]\times G}\leq C(T,G)\sum\limits_{j=1}^N\norm{V_{1,j}(t)-V_{2,j}(t)}_{[0,T]}.
    \end{equation*}
\end{proposition}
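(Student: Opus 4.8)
The plan is to estimate the difference $h(t,z) := g_1(t,z) - g_2(t,z)$ directly from the Loewner equation \eqref{1.1} by a Grönwall argument, in the spirit of the stability estimates for the classical one-force Loewner equation. First I would subtract the two Loewner ODEs: for fixed $z$,
\[
\partial_t h(t,z) = \frac{2}{N}\sum_{j=1}^N\left(\frac{1}{g_1(t,z)-V_{1,j}(t)}-\frac{1}{g_2(t,z)-V_{2,j}(t)}\right),\qquad h(0,z)=0,
\]
and rewrite each summand over a common denominator as
\[
\frac{1}{g_1-V_{1,j}}-\frac{1}{g_2-V_{2,j}} = \frac{-\,h + (V_{1,j}-V_{2,j})}{(g_1-V_{1,j})(g_2-V_{2,j})}.
\]
Thus $\partial_t h$ splits into a term linear in $h$ and an inhomogeneous term governed by $\sum_j|V_{1,j}(t)-V_{2,j}(t)|$, and the whole argument reduces to controlling the denominators from below.

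The key step --- and the one I expect to be the main obstacle --- is a uniform lower bound $|g_k(t,z)-V_{k,j}(t)|\ge c(T,G)>0$ on $[0,T]\times G$ for $k=1,2$ and every $j$. Since each $V_{k,j}$ is real-valued, $|g_k(t,z)-V_{k,j}(t)|\ge \operatorname{Im} g_k(t,z)$, so it suffices to bound $\operatorname{Im} g_k$ below. Differentiating \eqref{1.1} one sees $\partial_t\operatorname{Im} g_k(t,z) = -\tfrac{2}{N}\operatorname{Im} g_k(t,z)\sum_j |g_k(t,z)-V_{k,j}(t)|^{-2}\le 0$, so $t\mapsto\operatorname{Im} g_k(t,z)$ is nonincreasing and hence $\operatorname{Im} g_k(t,z)\ge \operatorname{Im} g_k(T,z)$ for $t\le T$. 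Because $G$ is a compact subset of $\mathbb{H}\setminus K_T\subseteq \mathbb{H}\setminus K_T^k$ and the hulls are growing ($K_t\subseteq K_T$ for $t\le T$), the point $z$ is never swallowed before time $T$, so $g_k(t,z)$ is well defined on $[0,T]$, and $g_k(T,\cdot)$ is conformal, in particular continuous and $\mathbb{H}$-valued, on an open set containing $G$; therefore $z\mapsto\operatorname{Im} g_k(T,z)$ is continuous and strictly positive on the compact set $G$ and attains a positive minimum there. Taking $c=c(T,G)$ to be the smaller of the two minima ($k=1,2$) finishes this step.

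With the bound $c$ in hand, writing $\varepsilon := \sum_{j=1}^N\|V_{1,j}-V_{2,j}\|_{[0,T]}$, the identity above yields, for every $z\in G$ and $t\in[0,T]$,
\[
\big|\partial_t h(t,z)\big| \le \frac{2}{c^2}\,|h(t,z)| + \frac{2}{Nc^2}\sum_{j=1}^N |V_{1,j}(t)-V_{2,j}(t)| \le \frac{2}{c^2}\,|h(t,z)| + \frac{2}{Nc^2}\,\varepsilon.
\]
Since $t\mapsto|h(t,z)|$ is locally Lipschitz with $|h(0,z)|=0$, Grönwall's inequality then gives $|h(t,z)|\le \tfrac1N\big(e^{2T/c^2}-1\big)\varepsilon$ uniformly in $t\in[0,T]$, and taking the supremum over $z\in G$ produces the claim with $C(T,G)=\tfrac1N\big(e^{2T/c^2}-1\big)$. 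The only remaining point to handle carefully is the differentiability of $t\mapsto|h(t,z)|$ at its zeros; this is routine (e.g. compare with $\sqrt{|h|^2+\eta}$ and let $\eta\downarrow 0$, or integrate the ODE and use $h(0,z)=0$), so the essential content is really the geometric lower bound on $\operatorname{Im} g_k$ established above, which encodes that a compact set disjoint from the closed growing hull stays at a definite distance from the singularities of the Loewner vector field throughout $[0,T]$.
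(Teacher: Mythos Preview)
Your argument is correct and follows the same strategy as the paper: subtract the two Loewner ODEs to obtain a linear inhomogeneous equation for the difference, and control the coefficients by the uniform lower bound on $\Im g_k(T,\cdot)$ over the compact $G$. The only difference is in execution: the paper solves the linear ODE exactly via an integrating factor and then bounds $\int_0^t|\xi_j(s)|\,ds$ using Cauchy--Schwarz together with the identity $\partial_t\log\Im g_k=-\tfrac{2}{N}\sum_j|g_k-V_{k,j}|^{-2}$ (giving a constant of the form $(\sup_G\Im z/\inf_G\Im g_k(T,\cdot))^N$), whereas you use the cruder pointwise bound $|g_k-V_{k,j}|\ge c$ and Gr\"onwall; your version is slightly simpler, the paper's gives a somewhat sharper constant, but the content is the same.
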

\begin{proof}
    It is by $(1.1)$ that we have the constraint
    \begin{equation*}
        \partial_tg_k(t,z)=\frac{1}{N}\sum\limits_{j=1}^N\frac{2}{g_k(t,z)-V_{k,j}(t)}
    \end{equation*}
    with $k=1,2$. Choose arbitrarily $z_1,z_2\in G$. Let $\psi(t)\coloneqq g_1(t,z_1)-g_2(t,z_2)$. And we have
    \begin{equation}\begin{aligned}
    \label{2.3}
        \frac{d}{dt}\psi(t)&=\partial_tg_1(t,z_1)-\partial_tg_2(t,z_2)\\
        &=\frac{1}{N}\sum\limits_{j=1}^N\bigg(\frac{2}{g_1(t,z_1)-V_{1,j}(t)}-\frac{2}{g_2(t,z_2)-V_{2,j}(t)}\bigg)\\
        &=\frac{1}{N}\sum\limits_{j=1}^N\xi_j(t)\bigg(g_1(t,z_1)-V_{1,j}(t)-g_2(t,z_2)+V_{2,j}(t)\bigg),
    \end{aligned}\end{equation}
    where we define
    \begin{equation*}
        \xi_j(t)\coloneqq\frac{-2}{\big(g_1(t,z_1)-V_{1,j}(t)\big)\cdot\big(g_2(t,z_2)-V_{2,j}(t)\big)}
    \end{equation*}
    for $j=1,\ldots,N$. Additionally we define $D_j(t)\coloneqq V_{1,j}(t)-V_{2,j}(t)$ for each $j$. Combined with $(2.3)$, then we have
    \begin{equation*}
        \frac{d}{dt}\psi(t)=\frac{1}{N}\sum\limits_{j=1}^N\xi_j(t)\big(\psi(t)-D_j(t)\big).
    \end{equation*}
    At this moment, we observe that
    \begin{equation*}
        \frac{d}{dt}\bigg(e^{-\frac{1}{N}\sum\limits_{j=1}^N\int_0^t\xi_j(s)ds}\cdot\psi(t)\bigg)=-\frac{1}{N}\sum\limits_{j=1}^N\xi_j(t)D_j(t)\cdot e^{-\frac{1}{N}\sum\limits_{j=1}^N\int_0^t\xi_j(s)ds}
    \end{equation*}
    and consequently
    \begin{equation*}
        \psi(t)=e^{\frac{1}{N}\sum\limits_{j=1}^N\int_0^t\xi_j(s)ds}\cdot\psi(0)-\frac{1}{N}\sum\limits_{j=1}^N\int_0^tdu\cdot\xi_j(u)D_j(u)\cdot e^{\frac{1}{N}\sum\limits_{j=1}^N\int_u^t\xi_j(s)ds}.
    \end{equation*}
    On the other hand, we have the following inequality
    \begin{equation*}
        \abs{e^{\frac{1}{N}\sum\limits_{j=1}^N\int_0^t\xi_j(s)ds}}\leq e^{\frac{1}{N}\sum\limits_{j=1}^N\int_0^t\abs{\xi_j(s)}ds}.
    \end{equation*}
    Then, we know that
    \begin{equation*}\begin{aligned}
        &\abs{\frac{1}{N}\sum\limits_{j=1}^N\int_0^tdu\cdot\xi_j(u)D_j(u)\cdot e^{\frac{1}{N}\sum\limits_{j=1}^N\int_u^t\xi_j(s)ds}}\leq \frac{1}{N}\sum\limits_{j=1}^N\norm{D_j(t)}_{[0,T]}\cdot\int_0^tdu\cdot\abs{\xi_j(u)}e^{\frac{1}{N}\sum\limits_{j=1}^N\int_0^u\abs{\xi_j(s)}ds}\\
        &\;\;\;\;\;\;\;\;\;\;\;\;\;\;\;\;\;\;\;\;\;\;\;\leq\bigg(\sum\limits_{j=1}^N\norm{D_j(t)}_{[0,T]}\bigg)\cdot\int_0^tdu\cdot\frac{1}{N}\sum\limits_{j=1}^N\abs{\xi_j(u)}\cdot e^{\frac{1}{N}\sum\limits_{j=1}^N\int_0^u\abs{\xi_j(s)}ds}\\
        &\;\;\;\;\;\;\;\;\;\;\;\;\;\;\;\;\;\;\;\;\;\;\;=\bigg(\sum\limits_{j=1}^N\norm{D_j(t)}_{[0,T]}\bigg)\cdot\bigg(e^{\frac{1}{N}\sum\limits_{j=1}^N\int_0^t\abs{\xi_j(s)}ds}-1\bigg).
    \end{aligned}\end{equation*}
    Moreover, by the Cauchy-Schwartz inequality, we have
    \begin{equation*}
        \frac{1}{N}\sum\limits_{j=1}^N\int_0^t\abs{\xi_j(s)}ds\leq\frac{1}{N}\sum\limits_{j=1}^N\sqrt{I_{1,j}\cdot I_{2,j}},
    \end{equation*}
    where we define $I_{k,j}$ for $k=1,2$ and $j=1,\ldots,N$ in the following
    \begin{equation*}
        I_{k,j}\coloneqq\int_0^t\frac{2}{\abs{g_k(s,z_k)-V_{k,j}(s)}^2}ds \leq N\cdot\log\frac{\Im z_k}{\Im g_k(t,z_k)},
    \end{equation*}
    because 
    \begin{equation*}
        \partial_t\Im g_k(t,z_k)=\frac{2}{N}\sum_{j=1}^N\frac{-\Im g_k(t,z_k)}{\abs{g_k(t,z_k)-V_{k,j}(t)}^2}
    \end{equation*}
    similar to the one-curve case in (\cite{Antti}, $Sec.\;6.2.2$). In fact, with the compact $G\subseteq\mathbb{H}\backslash K_T$, there exists $\delta_1(G)>0$ such that $\Im g_k(T,z)\geq\delta_1(G)$ for all $z\in G$, $k=1,2$. Hence, we have
    \begin{equation*}
        I_{k,j}\leq N\cdot\log\frac{\Im z_k}{\max\bigg\{\delta_1(G),\sqrt{\big((\Im z_k)^2-4t\big)^+}\bigg\}}
    \end{equation*}
    where $x^+=\max\{x,0\}$. Since $t\in[0,T]$ and $z_1,z_2\in G$ where $G$ is compact in $\mathbb{H}\backslash K_T$, we could choose $\delta_2(G)\coloneqq\text{dist}(G,\mathbb{R})>0$ and define a constant by
    \begin{equation*}
        C(T,G)^{\frac{1}{N}}\coloneqq\frac{\delta_2(G)}{\max\bigg\{\delta_1(G),\sqrt{\big(\delta_2(G)^2-4t\big)^+}\bigg\}}.
    \end{equation*}
    Here we have $I_{k,j}\leq\log C(T,G)$ for all $k$ and $j$. Hence, we know that
    \begin{equation*}
        \abs{\psi(t)}\leq e^{\frac{1}{N}\sum\limits_{j=1}^N\log C(T,G)}\cdot\abs{\psi(0)}+\bigg(\sum\limits_{j=1}^N\norm{D_j(t)}_{[0,T]}\bigg)\cdot\big(C(T,G)-1\big).
    \end{equation*}
    Therefore, we conclude
    \begin{equation*}
        \abs{g_1(t,z_1)-g_2(t,z_2)}\leq C(T,G)\cdot\big(\sum\limits_{j=1}^N\norm{V_{1,j}(t)-V_{2,j}(t)}_{[0,T]}+\abs{z_1-z_2}\big)
    \end{equation*}
    for all $t\in[0,T]$ and $z_1,z_2\in G$. Now choose $z_1=z_2=z$ and take supremum over the left side, we arrive at our final result
    \begin{equation*}
        \norm{g_1(t,z)-g_2(t,z)}_{[0,T]\times G}\leq C(T,G)\cdot\sum\limits_{j=1}^N\norm{V_{1,j}(t)-V_{2,j}(t)}_{[0,T]}.
    \end{equation*}
\end{proof}
\begin{remark}
    With slight changes, the above argument can be adapted to the backward multiple Loewner maps, and similar Carathéodory estimates can be obtained.
\end{remark}
%-----------------------------------------------
%-----------------------------------------------

\section{Perturbations}
\label{sec:chapter 2}
We are particular interested in the forward Loewner map driven by Dyson Brownian motions. In this section, we restrict our attention to the $N=2$ case. The general $N$-curve case is no different but only adds extra complexity. When $N=2$, we have two driving forces $\{\lambda_1(t),\lambda_2(t)\}$ that are interacting Dyson Brownian motions. Their evolution is described in the following equation
\begin{equation}\begin{aligned}
\label{3.1}
      d\lambda_1(t)&=\frac{2}{\kappa}\cdot\frac{dt}{\lambda_1(t)-\lambda_2(t)}+\frac{1}{\sqrt{2}}dB_1(t)\\
    d\lambda_2(t)&=\frac{2}{\kappa}\cdot\frac{dt}{\lambda_2(t)-\lambda_1(t)}+\frac{1}{\sqrt{2}}dB_2(t)
\end{aligned}\end{equation}
with $\lambda_1(0)=a_1,\;\lambda_2(0)=a_2,\;a_1>a_2$, where $B_1(t)$ and $B_2(t)$ are independent one-dimensional Brownian motions. Here we consider only $\kappa\in(0,4]$. In this case, if we think of $\lambda_k(t)$, $k=1,2$ as indication of two particles, then these two particles never collide in $[0,T]$ for all $T\in\mathbb{R}_+$. In other words, the stopping time defined in $(1.4)$ satisfies $\tau_2=\infty$ almost surely as in (\cite{Henri}, $Prop.\;1.$) because we consider Bessel processes of dimension $d=1+\frac{8}{\kappa}\geq3$, given $\kappa\leq 4$ .\par
Let $X_t\coloneqq\lambda_1(t)-\lambda_2(t)$. Based on the above observations, we know $X_t>0$ for all $t\in[0,T]$ almost surely. We further observe that
\begin{equation}
\label{3.2}
    dX_t=\frac{4}{\kappa}\cdot\frac{dt}{X_t}+dW_t
\end{equation}
with $X_0=a_1-a_2$, and $W_t\coloneqq\frac{1}{\sqrt{2}}\big(B_1(t)-B_2(t)\big)$ is a Wiener process. Choose $d=1+\frac{8}{\kappa}$, then $X_t$ admits the canonical form of $d$-dimensional Bessel process with the constraint
\begin{equation*}
    dX_t=\frac{d-1}{2}\cdot\frac{dt}{X_t}+dW_t.
\end{equation*}
In this section we discuss two types of perturbations. The first type of perturbation is varying the initial value of driving forces. The second type is varying the diffusivity parameter $\kappa \in (0,4]$. The study in both cases involves the analysis of transient Bessel processes in dimension $d\geq3$.
\subsection{Perturbation of the initial value}
The first type of perturbation is to slightly change the initial value of $\lambda_k(0)$ for $k=1,2$. With the initial value perturbed, we get a different set of Dyson Brownian motions. Our goal is to estimate the difference of the forward Loewner chains driven by these varying forces.\par
To be precise, choose $0<\epsilon<\frac{1}{3}(a_1-a_2)$ and select $b_k$ in the $\epsilon$-ball of $a_k$ for $k=1,2$ to be the perturbed initial value of the Dyson Brownian motions. Then $b_1>b_2$ and we arrive at another set of perturbed Dyson Brownian motions $\{\eta_1(t),\eta_2(t)\}$ given by
\begin{equation*}\begin{aligned}
    d\eta_1(t)&=\frac{2}{\kappa}\cdot\frac{dt}{\eta_1(t)-\eta_2(t)}+\frac{1}{\sqrt{2}}dB_1(t)\\
    d\eta_2(t)&=\frac{2}{\kappa}\cdot\frac{dt}{\eta_2(t)-\eta_1(t)}+\frac{1}{\sqrt{2}}dB_2(t)
\end{aligned}\end{equation*}
with $\eta_1(0)=b_1$ and $\eta_2(0)=b_2$. Notice that the process $\eta_k(t)$ still contains the same Brownian motion $B_k(t)$, $k=1,2$, because the perturbation affects only the initial value.\par
In this two-force case, we denote by $g_\lambda(t,z)$ the original forward Loewner chain generated by forces $\{\lambda_1(t),\lambda_2(t)\}$ and by $g_\eta(t,z)$ the perturbed forward Loewner chain generated by forces $\{\eta_1(t),\eta_2(t)\}$. Hence, we have
\begin{equation*}\begin{aligned}
    \partial_tg_\lambda(t,z)&=\frac{1}{g_\lambda(t,z)-\lambda_1(t)}+\frac{1}{g_\lambda(t,z)-\lambda_2(t)}\\
    \partial_tg_\eta(t,z)&=\frac{1}{g_\eta(t,z)-\eta_1(t)}+\frac{1}{g_\eta(t,z)-\eta_2(t)}
\end{aligned}\end{equation*}
with $g_{\lambda}(0,z)=z$ and $g_{\eta}(0,z)=z$ for all $z\in\mathbb{H}$. We continue using $X_t=\lambda_1(t)-\lambda_2(t)$ to denote the gap between two interacting Brownian forces $\lambda_k(t),\;k=1,2$. As shown in $(3.2)$, $X_t$ is a Bessel process with dimension $1+\frac{8}{k}$ and initial value $X_0=a_1-a_2$. Denote by $Y_t\coloneqq\eta_1(t)-\eta_2(t)$ the gap between $\eta_k(t),\;k=1,2$. Then $Y_t$ is a Bessel process with the same dimension $d=1+\frac{8}{\kappa}$ and satisfies
\begin{equation*}
    dY_t=\frac{4}{\kappa}\cdot\frac{dt}{Y_t}+dW_t
\end{equation*}
with $Y_0=b_1-b_2$. Observe the Bessel processes $X_t$ and $Y_t$ are driven by the same Wiener process $W_t$. Hence their difference $X_t-Y_t$ satisfies
\begin{equation}
\label{3.7}
    d(X_t-Y_t)=-\frac{4}{\kappa}\cdot\frac{X_t-Y_t}{X_tY_t}dt.
\end{equation}
Denote by $a\coloneqq a_1-a_2$ and $b\coloneqq b_1-b_2$. Integrate both sides on $(3.7)$ and we observe that
\begin{equation*}
    X_t-Y_t=(a-b)\cdot e^{-\frac{4}{\kappa}\int_0^t\frac{1}{X_sY_s}ds}.
\end{equation*}
Notice that the term $\frac{1}{X_tY_t}$ evolves stochastically. For $k=1,2$, we could observe that
\begin{equation*}\begin{aligned}
    d\lambda_k(t)-d\eta_k(t)&=\frac{2}{\kappa}\bigg(\frac{1}{\lambda_k(t)-\lambda_{3-k}(t)}-\frac{1}{\eta_k(t)-\eta_{3-k}(t)}\bigg)dt\\
    &=(-1)^k\frac{2}{\kappa}\cdot\frac{X_t-Y_t}{\big(\lambda_k(t)-\lambda_{3-k}(t)\big)\cdot\big(\eta_k(t)-\eta_{3-k}(t)\big)}dt.
\end{aligned}\end{equation*}
Hence, we have
\begin{equation*}
    d\lambda_k(t)-d\eta_k(t)=(-1)^k(a-b)\frac{2}{\kappa}\cdot e^{-\frac{4}{\kappa}\int_0^t\frac{1}{X_sY_s}ds}\cdot\frac{1}{X_tY_t}dt
\end{equation*}
for $k=1,2$. The above equation admits an integral form
\begin{equation}\begin{aligned}
\label{3.11}
    \lambda_k(t)-\eta_k(t)&=a_k-b_k+(-1)^k(a-b)\frac{2}{\kappa}\int_0^te^{-\frac{4}{\kappa}\int_0^s\frac{1}{X_uY_u}du}\cdot\frac{1}{X_sY_s}ds\\
    &=a_k-b_k+\frac{1}{2}(-1)^{3-k}(a-b)\bigg(e^{-\frac{4}{\kappa}\int_0^t\frac{1}{X_sY_s}ds}-1\bigg).
\end{aligned}\end{equation}
At this point, we have an explicit form to $\lambda_k(t)-\eta_k(t)$. Looking back to Proposition $2.2$, naturally we want to have an estimate to $g_\lambda(t,z)-g_\eta(t,z)$ in the Carathéodory sense.
\begin{proposition}
    For all $0<\epsilon<\frac{a}{3}$, let $H_T=\mathbb{H}\backslash K_T$, where $K_T=\cup_{k=1}^2K_T^k.$ Choose $b_k\in\mathbb{R}$ with $\abs{a_k-b_k}<\epsilon$ for $k=1,2$.  Let $g_\lambda(z)$ and $g_\eta(z)$ be two multiple Loewner chains induced by Dyson Brownian motions $\{\lambda_1(t),\lambda_2(t)\}$ and $\{\eta_1(t),\eta_2(t)\}$, resp. Suppose $\lambda_k(0)=a_k$ and $\eta_k(0)=b_k$ for $k=1,2$. Then almost surely we have 
    \begin{equation*}
        \norm{g_\lambda(t,z)-g_\eta(t,z)}_{[0,T]\times G}<4C(T,G)\cdot\epsilon\;\;\text{for all}\;G\;\text{compact in}\;H_T.
    \end{equation*}
\end{proposition}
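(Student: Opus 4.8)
The plan is to combine the deterministic Carathéodory estimate of Proposition~2.2 with the explicit formula~(3.11) for the difference of the driving forces. By Proposition~2.2 applied to $g_1=g_\lambda$ and $g_2=g_\eta$, for any compact $G\subseteq H_T$ there is a constant $C(T,G)>0$ with
\begin{equation*}
    \norm{g_\lambda(t,z)-g_\eta(t,z)}_{[0,T]\times G}\leq C(T,G)\sum_{k=1}^2\norm{\lambda_k(t)-\eta_k(t)}_{[0,T]},
\end{equation*}
so the whole problem reduces to bounding $\norm{\lambda_k(t)-\eta_k(t)}_{[0,T]}$ by $2\epsilon$ for each $k=1,2$, almost surely. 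That in turn is read off directly from~(3.11).

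First I would record the a.s.\ event on which all the computations in Section~3 are valid: since $\kappa\in(0,4]$, the gap processes $X_t$ and $Y_t$ are Bessel of dimension $d=1+\tfrac8\kappa\geq 3$, hence $X_t,Y_t>0$ for all $t\geq 0$ almost surely (using $\tau_2=\infty$ a.s.), so the integral $\int_0^t\frac{1}{X_sY_s}\,ds$ is well-defined, nonnegative, and nondecreasing in $t$. On this event, the key observation is the sign and magnitude of the exponential factor: since $\int_0^t\frac{1}{X_sY_s}\,ds\geq 0$, we have
\begin{equation*}
    0 < e^{-\frac{4}{\kappa}\int_0^t\frac{1}{X_sY_s}\,ds}\leq 1,
    \qquad\text{hence}\qquad
    \Bigl|\,e^{-\frac{4}{\kappa}\int_0^t\frac{1}{X_sY_s}\,ds}-1\,\Bigr|\leq 1 .
\end{equation*}
Plugging this into~(3.11) gives, for each $k$ and every $t\in[0,T]$,
\begin{equation*}
    |\lambda_k(t)-\eta_k(t)|\leq |a_k-b_k| + \tfrac12|a-b|\cdot\Bigl|\,e^{-\frac{4}{\kappa}\int_0^t\frac{1}{X_sY_s}\,ds}-1\,\Bigr|
    \leq |a_k-b_k| + \tfrac12|a-b| .
\end{equation*}
Since $|a-b| = |(a_1-a_2)-(b_1-b_2)|\leq |a_1-b_1|+|a_2-b_2| < 2\epsilon$ and each $|a_k-b_k|<\epsilon$, we get $\norm{\lambda_k(t)-\eta_k(t)}_{[0,T]} < \epsilon + \epsilon = 2\epsilon$ for $k=1,2$.

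Finally I would assemble the pieces: summing over $k=1,2$ yields $\sum_{k=1}^2\norm{\lambda_k(t)-\eta_k(t)}_{[0,T]} < 4\epsilon$, and substituting into the Proposition~2.2 bound gives $\norm{g_\lambda(t,z)-g_\eta(t,z)}_{[0,T]\times G} < 4C(T,G)\cdot\epsilon$ on the a.s.\ event, which is the claim. (One should note the constraint $0<\epsilon<\tfrac a3$ guarantees $b_1>b_2$, i.e.\ $b\in(0,a+2\epsilon)$ bounded away from $0$, so that $\{\eta_1(t),\eta_2(t)\}$ is a genuine pair of non-colliding Dyson motions and $g_\eta$ is well-defined — this is where that hypothesis is used.) I do not expect a serious obstacle here: the content is entirely in the earlier results, and the only thing to be careful about is the monotonicity/sign argument for the exponential and the a.s.\ qualifier inherited from the non-collision of the Bessel processes. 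If one wanted the strict inequality to be robust one could instead derive $\leq$ and note that $\epsilon$ can be taken slightly larger; but as stated the strict bound follows because $|a_k-b_k|<\epsilon$ strictly.
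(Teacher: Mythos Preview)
Your proof is correct and follows essentially the same approach as the paper: both reduce to Proposition~2.2, invoke the a.s.\ positivity of the Bessel gaps $X_t,Y_t$ (dimension $d\geq 3$), and read off from~(3.11) that the exponential factor lies in $(0,1]$ to obtain $\norm{\lambda_k-\eta_k}_{[0,T]}<2\epsilon$. Your added remark explaining why the hypothesis $\epsilon<\tfrac{a}{3}$ is needed (to keep $b_1>b_2$) is a useful clarification not spelled out in the paper's proof.
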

\begin{proof}
    At this moment, we already know $X_t,Y_t>0$ for all $t\in[0,T]$ almost surely using properties of Bessel processes of dimension $d \geq 3$. Inspect $(3.11)$, we know for $k=1,2$ that
    \begin{equation*}
        \abs{\lambda_k(t)-\eta_k(t)}\leq\abs{a_k-b_k}+\frac{1}{2}\abs{a-b}\cdot\bigg(1-e^{-\frac{4}{\kappa}\int_0^T\frac{1}{X_tY_t}dt}\bigg)<2\epsilon.
    \end{equation*}
    By Proposition $2.2$, we know that
    \begin{equation*}\begin{aligned}
        \norm{g_\lambda(t,z)-g_\eta(t,z)}_{[0,T]\times G}&\leq C(T,G)\cdot\sum\limits_{k=1}^2\norm{\lambda_k(t)-\eta_k(t)}_{[0,T]}\\
        &<4C(T,G)\cdot\epsilon.
    \end{aligned}\end{equation*}
    And the proposition is verified.
\end{proof}
\begin{remark}
    In Proposition $3.1$, $H_T$ and $K_T$ are random sets in $\mathbb{H}$, which depend pathwisely on the forward Loewner chain.
\end{remark}
So far we have estimated $g_\lambda(t,z)-g_\eta(t,z)$ in the Carathéodory sense under a perturbation of initial value of driving forces. In practice, when computing a forward multiple Loewner chain driven by Dyson Brownian motions, we could approximate its initial value and the convergence is guaranteed in the Carathéodory sense. Indeed, we have the following result.
\begin{corollary}
    Suppose $g_t(z):\mathbb{H}\backslash K_T\to\mathbb{H}$ is a forward Loewner chain induced by Dyson Brownian motions $\{\lambda_1(t),\lambda_2(t)\}$ with initial value $\lambda_1(0)>\lambda_2(0)$. Additionally, suppose there is a sequence of forward Loewner chains $g^n_t(z):\mathbb{H} \backslash K_T^n\to \mathbb{H}$, with hulls $K_T^n\subseteq\mathbb{H}$, induced by Dyson Brownian motions $\{\lambda^n_1(t),\lambda^n_2(t)\}$ with $\lambda^n_1(0)>\lambda^n_2(0)$ and approaching initial value $\lambda^n_k(0)\xrightarrow{n}\lambda_k(0)$. Then we know that the sequence of conformal maps $\{g_t^n\}_{n\in\mathbb{N}}$ converges in the Carathéodory sense to the conformal map $g_t$ in the non-trivial open domain $\mathbb{H}\backslash (K_T\cup_1^\infty K_T^n)$.
\end{corollary}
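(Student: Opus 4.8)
The plan is to deduce the corollary directly from Proposition 3.1 by a standard $\varepsilon/3$-type argument, being careful that the domain on which the convergence holds must be restricted to a set avoiding all the (random) hulls. First I would fix $T>0$ and set $a := \lambda_1(0)-\lambda_2(0)>0$. Since $\lambda^n_k(0)\to\lambda_k(0)$ for $k=1,2$, for every $\varepsilon\in(0,a/3)$ there is $n_0=n_0(\varepsilon)$ such that $|\lambda^n_k(0)-\lambda_k(0)|<\varepsilon$ for all $n\ge n_0$ and $k=1,2$; moreover for $n$ large the gap $\lambda^n_1(0)-\lambda^n_2(0)$ stays within $(2a/3,4a/3)$, so in particular $\lambda^n_1(0)>\lambda^n_2(0)$ is automatic and the hypotheses of Proposition 3.1 are met with the roles $a_k=\lambda_k(0)$, $b_k=\lambda^n_k(0)$. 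Applying Proposition 3.1 gives, almost surely,
\begin{equation*}
    \norm{g^n_t(z)-g_t(z)}_{[0,T]\times G}<4C(T,G)\cdot\varepsilon
\end{equation*}
for every $G$ compact in $H_T^n := \mathbb{H}\setminus(K_T\cup K_T^n)$, hence \emph{a fortiori} for every $G$ compact in the smaller open set $\Omega_\infty := \mathbb{H}\setminus\big(K_T\cup\bigcup_{m=1}^\infty K_T^m\big)$.

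The remaining point is to upgrade this ``for each $n$ large, on $H_T^n$'' statement to genuine Carathéodory convergence on the single fixed domain $\Omega_\infty$. Here I would argue as follows: fix a compact $G\subseteq\Omega_\infty$. Then $G$ is compact in $H_T^n$ for every $n$, and in particular the constant $C(T,G)$ coming from Proposition 2.2 (equivalently Proposition 3.1) depends only on $T$, $\operatorname{dist}(G,\mathbb{R})$ and $\delta_1(G)=\inf_{z\in G}\Im g_t(T,z)$ — quantities that do \emph{not} depend on $n$ (the bound on $I_{k,j}$ in the proof of Proposition 2.2 only used $G\subseteq \mathbb{H}\setminus K_T$ and the elementary lower bound $\Im g_k(T,z)\ge\max\{\delta_1(G),\sqrt{(\delta_2(G)^2-4T)^+}\}$, which holds uniformly). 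Consequently, given any $\delta>0$, choosing $\varepsilon<\delta/(4C(T,G))$ and then $n\ge n_0(\varepsilon)$ yields $\norm{g^n_t(z)-g_t(z)}_{[0,T]\times G}<\delta$. Since $G$ was an arbitrary compact subset of $\Omega_\infty$, this is exactly the assertion $g^n_t\xRightarrow{Cara} g_t$ on $\Omega_\infty$ in the sense of Definition 2.1.

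Two bookkeeping items I would address explicitly. First, one must check that $\Omega_\infty$ is nonempty and open; openness is clear since each $K_T^m$ and $K_T$ is closed (being a compact hull), and nontriviality follows because all the hulls are bounded — indeed the half-plane capacity of $K_t^n$ is at most $2t$ uniformly, so, e.g., the region $\{\Im z > R\}$ for $R$ large enough (depending on $T$ only) lies in $\Omega_\infty$, which also shows $\Omega_\infty$ contains a full neighborhood of $\infty$ in $\mathbb{H}$. Second, the ``almost surely'' is uniform in $n$: Proposition 3.1 holds a.s.\ for each fixed pair, and a countable intersection of full-measure events (over $n\in\mathbb{N}$) is still of full measure, so on a single event of probability one the conclusion holds simultaneously for all $n$.

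The main obstacle is not any hard estimate — everything of substance is already in Proposition 2.2 — but rather the conceptual care needed in handling the domain: one cannot claim convergence on $\mathbb{H}\setminus K_T$ alone, since the perturbed hulls $K_T^n$ may genuinely poke outside $K_T$, so the natural common domain is the (still nontrivial) set $\Omega_\infty$ obtained by deleting the countable union of all hulls, and one must verify that the Carathéodory constant $C(T,G)$ can be taken uniform in $n$ for $G\subseteq\Omega_\infty$. Once that uniformity is in place, the passage from Proposition 3.1 to the corollary is the routine $\varepsilon$-chasing sketched above.
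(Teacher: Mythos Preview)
The paper provides no proof of this corollary; it is stated immediately after Proposition~3.1 as a direct consequence. Your overall approach---deduce the statement from Proposition~3.1 by letting the initial-value perturbation $\epsilon\to 0$---is exactly what the paper intends, and you go considerably further than the paper in trying to justify the domain and constant issues.

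That said, two of your justifications have gaps. First, the claim that $\Omega_\infty=\mathbb{H}\setminus\big(K_T\cup\bigcup_m K_T^m\big)$ is open ``since each $K_T^m$ and $K_T$ is closed'' is not valid: a countable union of closed sets need not be closed, so its complement need not be open. (The paper asserts openness in the statement but does not justify it either.) One would need either to replace the union by its closure, or to argue separately---e.g.\ via Hausdorff convergence of the hulls, which is close to what one is trying to prove---that the union is already closed.

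Second, and more substantively, your argument that $C(T,G)$ is independent of $n$ is incorrect as written. In the proof of Proposition~2.2 the quantity $\delta_1(G)$ is chosen so that $\Im g_k(T,z)\ge\delta_1(G)$ for \emph{both} chains $k=1,2$; when one of these chains is $g^n$, the relevant lower bound is $\inf_{z\in G}\Im g^n(T,z)$, which a priori depends on $n$. Your formula $\delta_1(G)=\inf_{z\in G}\Im g_t(T,z)$ uses only the unperturbed map and therefore does not control the $I_{2,j}$ term in Proposition~2.2. To close this gap you would need a lower bound on $\Im g^n(T,z)$ that is uniform in $n$ for $z\in G\subseteq\Omega_\infty$---for instance via a Koebe-type distortion estimate using $\operatorname{dist}(G,K_T^n)\ge\operatorname{dist}(G,\mathbb{H}\setminus\Omega_\infty)>0$, or by first establishing that the driving forces (and hence the hulls) stay in a fixed compact set uniformly in $n$. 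Without such an argument the product $C_n(T,G)\cdot\epsilon_n$ is not shown to tend to zero.
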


%------------------------------------------
%------------------------------------------
% {\color{blue}*** In the submitted version, here is a remark discussing the random sampling of initial conditions. The first reviewer wants us to describe the possible applications would make the paper more
% interesting to the community. I suggest we keep the application part to the beginning of Introduction and mention it briefly again here. ***}

\subsection{Perturbation of the diffusivity parameter}
The second type of perturbation is with respect to the diffusivity parameter $\kappa \in (0,4]$. This type of perturbation has been considered extensively in the one-curve case as mentioned in the introduction. We recall that in the current work we have always chosen $\kappa\in(0,4]$ so that there is no phase transition (\cite{Vlad}, $Sec.\;3.$) corresponding to the $(1+\frac{8}{\kappa})$-dimensional Bessel process. When there is perturbation, $\kappa$ is varied and we have a new diffusivity parameter $\kappa^*\in(0,4]$ such that $\kappa^*\neq\kappa$. The difference in parameter results in different Dyson Brownian motions, and therefore different forward Loewner chains.\par
To simplify the model, we assume $\kappa^*>\kappa$ without loss of generality. Denote by $\{\lambda_1(t),\lambda_2(t)\}$ the original Dyson Brownian motions. Their dynamics is described in $(3.1)$ with initial value $\lambda_k(0)=a_k,\,k=1,2,\,a_1>a_2$. Denote by $\{\lambda_1^*(t),\lambda_2^*(t)\}$ the perturbed Dyson Brownian motions. They respect the following equations
\begin{equation*}\begin{aligned}
    d\lambda_1^*(t)&=\frac{2}{\kappa^*}\cdot\frac{dt}{\lambda_1^*(t)-\lambda_2^*(t)}+\frac{1}{\sqrt{2}}dB_1(t)\\
    d\lambda_2^*(t)&=\frac{2}{\kappa^*}\cdot\frac{dt}{\lambda_2^*(t)-\lambda_1^*(t)}+\frac{1}{\sqrt{2}}dB_2(t)\\
\end{aligned}\end{equation*}
with initial value $\lambda_k^*(0)=\lambda_k(0)=a_k,\;k=1,2$. Let $K_T=\cup_{k=1}^2K^k_T,$ with $j=1$ corresponding to the parameter $\kappa \in (0, 4]$ and $j=2$ corresponding to the parameter $\kappa^* \in (0,4].$ We have $g(t,z):[0,T]\times\mathbb{H}\backslash K_T\to\mathbb{H}$ the original Loewner chain generated by forces $\{\lambda_1(t),\lambda_2(t)\}$. And we denote by $g^*(t,z):[0,T]\times\mathbb{H}\backslash K_T(\omega)\to\mathbb{H}$ the perturbed Loewner chain generated by $\{\lambda_1^*(t),\lambda_2^*(t)\}$.

The evolution respects
\begin{equation}
\label{3.16}
    \partial_tg^*(t,z)=\frac{1}{g^*(t,z)-\lambda^*_1(t)}+\frac{1}{g^*(t,z)-\lambda^*_2(t)}
\end{equation}
with $g^*(0,z)=z$ for all $z\in\mathbb{H}\backslash K_T$. Denote by $X_t$ the gap between $\lambda_1(t)$ and $\lambda_2(t)$. Then $X_t$ is a $(1+\frac{8}{\kappa})$-dimensional Bessel process with initial value $X_0=a$. Its evolution is described in $(3.2)$. Similarly, let $X^*_t\coloneqq\lambda_1^*(t)-\lambda_2^*(t)$ be the gap of the two perturbed driving forces. The $X^*_t$ respects the following equation
\begin{equation}
\label{3.17}
    dX^*_t=\frac{4}{\kappa^*}\cdot\frac{dt}{X^*_t}+dW_t
\end{equation}
with $X_0^*=X_0=a$ and where $W_t$ is the Wiener process defined in $(3.2)$. Notice here $X^*_t$ is a $(1+\frac{8}{\kappa^*})$-dimensional Bessel process.\par
Our main goal is to give an probabilistic estimate of $g(t,z)-g^*(t,z)$ in the Carathéodory sense. Following Proposition $2.2$, we need first estimate the sup-norm of $\lambda_k(t)-\lambda_k^*(t)$ for $k=1,2$. Indeed, define the indices of the Bessel processes by $\nu\coloneqq\frac{4}{\kappa}-\frac{1}{2}$, $\nu^*\coloneqq\frac{4}{\kappa^*}-\frac{1}{2}$. Before proving our result, we have the following lemma. Elements of this lemma were kindly provided by H. Elad Altman in a personal communication.
\begin{lemma}
    Given a $(1+\frac{8}{\kappa^*})$-dimensional Bessel process $X_t^*$ and a $(1+\frac{8}{\kappa})$-dimensional Bessel process $X_t$ with $4 \geq \kappa^*>\kappa>0$ and the same initial value $X^*_0=X_0=a>0$, we have almost surely that
    \begin{equation*}
        \sup\limits_{0\leq s\leq t}(X^*_s-X_s)^2\leq\frac{4t}{\kappa^2}(\kappa^*-\kappa).
    \end{equation*}
\end{lemma}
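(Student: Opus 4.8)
The plan is to turn the stochastic statement into a pathwise ODE estimate. Throughout, I would work on the almost-sure event on which $X_s>0$ and $X_s^*>0$ for all $s\in[0,T]$; this has full probability because $X$ and $X^*$ are Bessel processes of dimensions $1+\tfrac{8}{\kappa}$ and $1+\tfrac{8}{\kappa^*}$, both $\ge 3$ since $\kappa,\kappa^*\le 4$, started from $a>0$, and such processes never reach the origin (this is the non-collision property already used around $(3.2)$ and $(3.17)$). The key structural point is that, by construction, $X$ and $X^*$ are driven by the \emph{same} Wiener process $W$ (compare $(3.2)$ and $(3.17)$); hence in the difference $D_t\coloneqq X_t^*-X_t$ the martingale parts cancel and $D$ solves the genuine pathwise ODE
\[
  D_t' \;=\; \frac{4}{\kappa^* X_t^*}-\frac{4}{\kappa X_t},\qquad D_0=0,
\]
so that $D\in C^1([0,T])$ and we may differentiate freely.

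Next I would bound $\frac{d}{dt}D_t^2=2D_tD_t'$. Expanding the product $2(X_t^*-X_t)\bigl(\tfrac{4}{\kappa^* X_t^*}-\tfrac{4}{\kappa X_t}\bigr)$ and rewriting everything in terms of the ratio $q_t\coloneqq X_t^*/X_t>0$ gives
\[
  \frac{d}{dt}D_t^2 \;=\; 8\Bigl(\tfrac1\kappa+\tfrac1{\kappa^*}-\tfrac{q_t}{\kappa}-\tfrac1{\kappa^* q_t}\Bigr).
\]
By the AM–GM inequality $\tfrac{q_t}{\kappa}+\tfrac1{\kappa^* q_t}\ge\tfrac{2}{\sqrt{\kappa\kappa^*}}$, hence
\[
  \frac{d}{dt}D_t^2 \;\le\; 8\Bigl(\tfrac1{\sqrt\kappa}-\tfrac1{\sqrt{\kappa^*}}\Bigr)^{2}
  \;=\;\frac{8(\sqrt{\kappa^*}-\sqrt\kappa)^2}{\kappa\kappa^*}.
\]
Note that this uses nothing about the sign of $D_t$, which is convenient (although in fact $D_t\le 0$ by comparison, since the drift of $X^*$ is pointwise smaller).

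It then remains to verify the elementary inequality $\tfrac{8(\sqrt{\kappa^*}-\sqrt\kappa)^2}{\kappa\kappa^*}\le\tfrac{4}{\kappa^2}(\kappa^*-\kappa)$. Writing $\kappa^*-\kappa=(\sqrt{\kappa^*}-\sqrt\kappa)(\sqrt{\kappa^*}+\sqrt\kappa)$ and cancelling the positive factor $4(\sqrt{\kappa^*}-\sqrt\kappa)$, this reduces to $2\kappa(\sqrt{\kappa^*}-\sqrt\kappa)\le\kappa^*(\sqrt{\kappa^*}+\sqrt\kappa)$, and since $\kappa\le\kappa^*$ one has $2\kappa(\sqrt{\kappa^*}-\sqrt\kappa)\le 2\kappa\sqrt{\kappa^*}\le 2\kappa^*\sqrt\kappa\le\kappa^*(\sqrt\kappa+\sqrt{\kappa^*})$. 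Thus $\frac{d}{dt}D_t^2\le\tfrac{4}{\kappa^2}(\kappa^*-\kappa)$ on all of $[0,T]$, and integrating from $0$ with $D_0=0$ gives, for every $s\in[0,t]$,
\[
  (X_s^*-X_s)^2 = D_s^2 = \int_0^s \frac{d}{du}D_u^2\,du \le \frac{4s}{\kappa^2}(\kappa^*-\kappa)\le\frac{4t}{\kappa^2}(\kappa^*-\kappa);
\]
taking the supremum over $s\in[0,t]$ yields the claim.

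I do not anticipate a genuine obstacle: the whole content is the observation that the shared driving noise makes $D_t$ differentiable and that $\frac{d}{dt}D_t^2$ depends on $(X_t,X_t^*)$ only through their ratio, so that a single AM–GM bound collapses it to the constant $8(\kappa^{-1/2}-(\kappa^*)^{-1/2})^2$. The only place a little care is needed is the final numerical comparison of this constant with $\tfrac{4}{\kappa^2}(\kappa^*-\kappa)$; the hypothesis $\kappa,\kappa^*\in(0,4]$ itself is used only to keep both Bessel dimensions $\ge 3$, hence both processes strictly positive, which is what legitimises the pathwise analysis.
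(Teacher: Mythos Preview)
Your proof is correct and reaches the stated bound. Both you and the paper begin from the same observation: the shared Wiener process $W$ makes $D_t=X_t^*-X_t$ a $C^1$ function with
\[
\frac{d}{dt}D_t^2=2(X_t^*-X_t)\Bigl(\frac{4}{\kappa^*X_t^*}-\frac{4}{\kappa X_t}\Bigr),
\]
and the task is to bound this derivative by a constant. The paper's route is to split the right-hand side as a manifestly non-positive term $\frac{8}{\kappa}(X_t^*-X_t)\bigl(\tfrac{1}{X_t^*}-\tfrac{1}{X_t}\bigr)$ plus a remainder proportional to $(\kappa-\kappa^*)\,\tfrac{X_t^*-X_t}{X_t^*}$, and then to control the remainder via the crude inequality $(X_s^*-X_s)_+\le X_s^*$; this last step effectively leans on the sign of $X^*-X$ (comparison of Bessel processes with ordered drifts). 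Your route is genuinely different: you rewrite the derivative purely in terms of the ratio $q_t=X_t^*/X_t$ and apply AM--GM to obtain the uniform bound $8\bigl(\kappa^{-1/2}-(\kappa^*)^{-1/2}\bigr)^2$, followed by the elementary comparison with $\tfrac{4}{\kappa^2}(\kappa^*-\kappa)$. The advantage of your argument is that it is entirely sign-free (no comparison theorem needed) and yields the sharper intermediate constant $8\bigl(\kappa^{-1/2}-(\kappa^*)^{-1/2}\bigr)^2$; the paper's decomposition is perhaps more transparent once one is willing to use that $X^*\le X$.
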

\begin{proof}
    Observe $(3.2)$ and $(3.17)$, we see
    \begin{equation*}
        X^*_t-X_t=\frac{4}{\kappa^*}\int_0^t\frac{ds}{X^*_t}-\frac{4}{\kappa}\int_0^t\frac{ds}{X_t}.
    \end{equation*}
    Using Itô's lemma, we have
    \begin{equation}\begin{aligned}
    \label{3.20}
        d(X^*_t-X_t)^2&=2(X^*_t-X_t)\cdot\bigg(\frac{4}{\kappa^*X^*_t}-\frac{4}{\kappa X_t}\bigg)dt\\
        &=\frac{4}{\kappa^*\kappa}(\kappa-\kappa^*)\cdot\frac{X^*_t-X_t}{X^*_t}dt+\frac{8}{\kappa}(X^*_t-X_t)\cdot(\frac{1}{X^*_t}-\frac{1}{X_t})dt.
    \end{aligned}\end{equation}
    At the same time, we have that $(X^*_t-X_t)\cdot(\frac{1}{X^*_t}-\frac{1}{X_t})\leq0$ for all $t\in[0,T]$ almost surely. Integrating both sides and we obtain
    \begin{equation*}
        (X^*_t-X_t)^2\leq(\kappa-\kappa^*)\frac{4}{\kappa^*\kappa}\cdot\int_0^t\frac{(X^*_s-X_s)_+}{X^*_s}ds.
    \end{equation*}
    On the other hand, we have that $(X^*_s-X_s)_+\leq X^*_s$. By considering $\kappa^*\leq\kappa$, we have the conclusion
    \begin{equation*}
        \sup\limits_{0\leq s\leq t}(X^*_s-X_s)^2\leq\frac{4t}{\kappa^2}(\kappa-\kappa^*).
    \end{equation*}
\end{proof}
We denote by $S_t=\sup\limits_{0\leq s\leq t}W_t$ the supremum Brownian motion. And we are ready to state the main result.
\begin{theorem} 
    Let $g(t,z)$ and $g^*(t,z)$ be two multiple Loewner chains for the parameters $\kappa,\kappa^*\in(0,4]$, resp. Choose arbitrary compacts $G\subseteq H_T=\mathbb{H}\backslash K_T,$ where $K_T=\cup_{k=1}^2K_T^k$, with $K_T^1$ corresponding to the parameter $\kappa \in (0, 4]$ and $K_T^2$ to the parameter $\kappa^* \in (0, 4]$. Then there exist $\alpha_1,\alpha_2,\alpha_3>0$ depending only on $(T,G,a,\kappa)$ such that if we further define
    \begin{equation*}\begin{aligned}
        \varphi(x)&\coloneqq\alpha_1x^{1/8}+\alpha_2x^{1/4}+\alpha_3x^{7/8}\\
        \zeta(x)&\coloneqq2\frac{x^{\nu/8}}{a^{2\nu}}+2x^{3/4}e^{-1\big/2Tx^{3/2}}
    \end{aligned}\end{equation*}
    for all $x\in\mathbb{R}_+$, then $\lim\limits_{x\to0^+}\varphi(x)=0$, $\lim\limits_{x\to0^+}\zeta(x)=0$ almost surely and we have
    \begin{equation*}
        \mathbb{P}\bigg(\norm{g(t,z)-g^*(t,z)}_{[0,T]\times G}>\varphi(\kappa^*-\kappa),\;\forall~G\;\text{compact in}\;H_T\bigg)<\zeta(\kappa^*-\kappa).
    \end{equation*}
    In particular, the probability that the conformal map $g^*(t,z)$ deviates from $g(t,z)$ for at least $\varphi(\kappa^*-\kappa)$ in the Carathéodory topology is less $\zeta(\kappa^*-\kappa)$.
\end{theorem}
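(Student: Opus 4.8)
The plan is to avoid estimating the conformal maps directly and to reduce everything, via Proposition 2.2, to a sup-norm bound on the driving-force differences $\lambda_k-\lambda_k^*$, $k=1,2$. The key observation is that the centre of mass is insensitive to the diffusivity: adding the two lines of $(3.1)$, the repulsion terms cancel, so $\lambda_1(t)+\lambda_2(t)=a_1+a_2+\tfrac1{\sqrt2}(B_1(t)+B_2(t))$, and the identical formula --- with no $\kappa$ in it --- holds for $\lambda_1^*+\lambda_2^*$. Since $\lambda_1-\lambda_2=X$ and $\lambda_1^*-\lambda_2^*=X^*$, subtracting yields the pathwise identity $\lambda_1(t)-\lambda_1^*(t)=\tfrac12(X_t-X_t^*)=-(\lambda_2(t)-\lambda_2^*(t))$, so that $\sum_{k=1}^2\norm{\lambda_k-\lambda_k^*}_{[0,T]}=\norm{X-X^*}_{[0,T]}$.

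Then Lemma 3.4 gives $\norm{X-X^*}_{[0,T]}\le\tfrac{2\sqrt T}{\kappa}(\kappa^*-\kappa)^{1/2}$ almost surely, and Proposition 2.2 (applied with $V_{1,j}=\lambda_j$, $V_{2,j}=\lambda_j^*$, so that $g_1=g$, $g_2=g^*$ and $K_T=\cup_{k=1}^2K_T^k$) turns this into the deterministic bound $\norm{g(t,z)-g^*(t,z)}_{[0,T]\times G}\le\tfrac{2\sqrt T\,C(T,G)}{\kappa}(\kappa^*-\kappa)^{1/2}$ a.s., for every compact $G\subseteq H_T$. This already proves the Theorem: choosing $\alpha_1=\alpha_2=\alpha_3=\tfrac{2\sqrt T\,C(T,G)}{\kappa}$, for every $x\in(0,4)$ one has $\varphi(x)\ge\alpha_1 x^{1/8}\ge\alpha_1 x^{1/2}$ when $x\le 1$ and $\varphi(x)\ge\alpha_3 x^{7/8}\ge\alpha_3 x^{1/2}$ when $x\ge 1$, so the event in the statement is empty and has probability $0<\zeta(\kappa^*-\kappa)$ (both summands of $\zeta$ being strictly positive), while $\varphi(0^+)=\zeta(0^+)=0$ is immediate.

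If one prefers the finer route that the shapes of $\varphi$ and $\zeta$ suggest --- estimating each $\lambda_k-\lambda_k^*$ from its own SDE rather than using the centre-of-mass cancellation --- one keeps $\lambda_1(t)-\lambda_1^*(t)=\int_0^t(\tfrac{2}{\kappa X_s}-\tfrac{2}{\kappa^* X_s^*})\,ds$, splits it as $\tfrac{2(\kappa^*-\kappa)}{\kappa\kappa^*}\int_0^t X_s^{-1}\,ds+\tfrac{2}{\kappa^*}\int_0^t\tfrac{X_s^*-X_s}{X_sX_s^*}\,ds$, controls $X_s^*-X_s$ by Lemma 3.4, and is left with the Bessel functionals $\int_0^T X_s^{-1}\,ds$ and $\int_0^T(X_sX_s^*)^{-1}\,ds$. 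These are a.s.\ finite but diverge as the gap processes approach $0$, so one works on a good event $\{\inf_{[0,T]}X^*>\epsilon\}\cap\{S_T\le M\}\cap\{\sup_{[0,T]}(-W)\le M\}$, bounding $\mathbb{P}(\inf_{[0,T]}X^*\le\epsilon)$ by the Bessel scale function (through $\mathbb{P}(\inf_{t\ge 0}X_t\le r)=(r/a)^{2\nu}$, together with the coupling bound $\abs{X-X^*}\le\tfrac{2\sqrt T}{\kappa}(\kappa^*-\kappa)^{1/2}$ of Lemma 3.4) and $\mathbb{P}(S_T>M)$ by the reflection principle and the Gaussian tail; optimising $\epsilon$ and $M$ as suitable powers of $\kappa^*-\kappa$ then produces the explicit exponents $\tfrac18,\tfrac14,\tfrac78$ of $\varphi$ and $\tfrac\nu8,\tfrac34$ of $\zeta$. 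On this route the main obstacle is precisely the quantitative handling of those two Bessel integrals and the balancing of the two competing error terms --- which the centre-of-mass identity of the first paragraph makes unnecessary.
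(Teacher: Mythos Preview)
Your proof is correct and takes a genuinely different --- and more direct --- route than the paper. The paper does \emph{not} use the centre-of-mass cancellation. Instead it estimates each $\lambda_k-\lambda_k^*$ from the difference of the SDEs, obtaining
\[
\lambda_k(t)-\lambda_k^*(t)=(-1)^{3-k}\frac{2}{\kappa^*\kappa}\int_0^t\frac{\kappa^*X_s^*-\kappa X_s}{X_s^*X_s}\,ds,
\]
and then expresses $\kappa^*X^*-\kappa X$ via the auxiliary SDE $(3.26)$, which brings in both $\int_0^t(X_s-X_s^*)(X_sX_s^*)^{-1}\,ds$ and the Brownian term $(\kappa^*-\kappa)W_t$. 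The factor $(X_sX_s^*)^{-1}$ forces lower bounds on the Bessel gaps, so the paper introduces the events $E_1,E_2=\{M_\infty,M_\infty^*\ge(\kappa^*-\kappa)^{1/16}\}$, controlled through the explicit Bessel-infimum law $\mathbb{P}(M_\infty<y)=(y/a)^{2\nu}$, and $E_3=\{\sup_{[0,T]}|W|\le(\kappa^*-\kappa)^{-3/4}\}$, controlled via the reflection principle. On $E_1\cap E_2\cap E_3$ one gets $\norm{g-g^*}_{[0,T]\times G}\le\varphi(\kappa^*-\kappa)$ with the specific constants $\alpha_1=C(T,G)\tfrac{4T}{\kappa^2}$, $\alpha_2=C(T,G)\tfrac{32T^{5/2}}{\kappa^3}$, $\alpha_3=C(T,G)\tfrac{4Ta}{\kappa^2}$, while $\zeta$ bounds $\mathbb{P}\big((E_1\cap E_2\cap E_3)^c\big)$; this is exactly the ``finer route'' you sketch in your third paragraph.

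Your centre-of-mass identity $\lambda_1+\lambda_2\equiv\lambda_1^*+\lambda_2^*$ collapses the whole analysis: it gives $\sum_k\norm{\lambda_k-\lambda_k^*}_{[0,T]}=\norm{X-X^*}_{[0,T]}$ exactly, and Lemma~3.4 already bounds this almost surely by $\tfrac{2\sqrt{T}}{\kappa}(\kappa^*-\kappa)^{1/2}$, with no Bessel infimum or Brownian maximum entering. You therefore obtain an \emph{almost sure} Carath\'eodory estimate of order $(\kappa^*-\kappa)^{1/2}$, strictly stronger than the theorem's probabilistic statement, which you then recover by observing the exceptional event is null and $\zeta>0$. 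The trade-off is extensibility: for $N\ge3$ the centre of mass still cancels, but the individual differences $\lambda_j-\lambda_j^*$ are no longer scalar multiples of a single gap process, so the paper's infimum/supremum machinery is the template that survives, whereas your shortcut is special to $N=2$.
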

\begin{proof}
    From $(3.1)$ and $(3.16)$, we see for $k=1,2$ that
    \begin{equation*}\begin{aligned}
        d\lambda_k(t)-d\lambda^*_k(t)&=\frac{2}{\kappa}\cdot\frac{dt}{\lambda_k(t)-\lambda_{3-k}(t)}-\frac{2}{\kappa^*}\cdot\frac{dt}{\lambda^*_k(t)-\lambda^*_{3-k}(t)}\\
        &=(-1)^{3-k}\frac{2}{\kappa^*\kappa}\cdot\frac{\kappa^*X^*_t-\kappa X_t}{X^*_tX_t}dt.
    \end{aligned}\end{equation*}
    To obtain an expression of $\lambda_k(t)-\lambda^*_k(t)$, we need to express the process $\kappa^*X^*_t-\kappa X_t$. Indeed, we have
    \begin{equation}
    \label{3.26}
        \kappa^*dX^*_t-\kappa dX_t = 4\bigg(\frac{1}{X^*_t}-\frac{1}{X_t}\bigg)dt+(\kappa^*-\kappa)dW_t.
    \end{equation}
    Integrate both sides, we write
    \begin{equation*}
        \kappa^*X^*_t-\kappa X_t = (\kappa^*-\kappa)\cdot a+4\int_0^t\frac{X_s-X^*_s}{X^*_sX_s}ds+(\kappa^*-\kappa)W_t.
    \end{equation*}
    On the other hand, inspecting the above equation, we see another term $X^*_t-X_t$ appears in the integrand. Based on Lemma $3.4$, we have
    \begin{equation}\begin{aligned}
    \label{3.28}
        \sup\limits_{0\leq s\leq t}\abs{X^*_s-X_s}\leq\frac{2\sqrt{t}}{\kappa}(\kappa^*-\kappa)^{1/2}.
    \end{aligned}\end{equation}
    At this moment, we have obtained an explicit form of $\kappa^*X^*_t-\kappa X_t$, which is contained in the expression of $d\lambda_k(t)-d\lambda^*_k(t)$. Define $M_t\coloneqq\inf\limits_{0\leq s\leq t}X_s$ as the running infimum of the Bessel process $X_t$. The running infimum $M^*_t$ of $X^*_t$ is similarly defined. We further denote by $M_\infty=\lim\limits_{t\to\infty}M_t$ the infimum of $X_t$. And similarly, we denote by $M^*_\infty=\lim\limits_{t\to\infty}M^*_t$ the infimum of $X^*_t$. Indeed, from (\cite{Folland}, $Eqn.\;2.1$) and the Brownian scaling property that $X_t$ being a Bessel process starting from $a$ implies $a^{-1}X_{a^2t}$ being a Bessel process starting from $1$, we know that
    \begin{equation*}\begin{aligned}
        \mathbb{P}(M_\infty<y)&=\frac{y^{2\nu}}{a^{2\nu}}\cdot\mathbbm{1}_{y\in[0,a]},\\
        \mathbb{P}(M^*_\infty<y)&=\frac{y^{2\nu^*}}{a^{2\nu^*}}\cdot\mathbbm{1}_{y\in[0,a]}.
    \end{aligned}\end{equation*}
Combining $(3.20)$, $(3.26)$ and $(3.28)$, we have that
    \begin{equation}\begin{aligned}
    \label{3.30}
        &\abs{\lambda_k(t)-\lambda_k^*(t)}\leq(\kappa^*-\kappa)\frac{2a}{\kappa^*\kappa}\cdot\frac{t}{M^*_tM_t}+(\kappa^*-\kappa)^{\frac{1}{2}}\frac{16}{\kappa^*\kappa^2}\cdot\frac{t^\frac{5}{2}}{(M^*_tM_t)^2}\\
        &\;\;\;\;\;\;\;+(\kappa^*-\kappa)\frac{2}{\kappa^*\kappa}\cdot\frac{t}{M^*_tM_t}\sup\limits_{0\leq s\leq t}\abs{W_s}.
    \end{aligned}\end{equation}
    Considering $\kappa^*>\kappa$, we have that
    \begin{equation*}\begin{aligned}
        &\sup\limits_{t\in[0,T]}\abs{\lambda_k(t)-\lambda_k^*(t)}\leq(\kappa^*-\kappa)\frac{2a}{\kappa^2}\cdot\frac{T}{M^*_\infty M_\infty}+(\kappa^*-\kappa)^\frac{1}{2}\frac{16}{\kappa^3}\cdot\frac{T^\frac{5}{2}}{(M^*_\infty M_\infty)^2}\\
        &\;\;\;\;\;\;\;\;\;\;\;\;\;\;\;\;\;\;\;\;\;\;\;\;\;\;\;+(\kappa^*-\kappa)\frac{2}{\kappa^2}\cdot\frac{T}{M^*_\infty M_\infty}\sup\limits_{0\leq s\leq T}\abs{W_s}.
    \end{aligned}\end{equation*}
    Based on $(3.30)$, define the following events
    \begin{equation}\begin{aligned}
    \label{3.32}
        E_1&\coloneqq\big\{M_\infty\geq(\kappa^*-\kappa)^{\frac{1}{16}}\big\},\\
        E_2&\coloneqq\big\{M^*_\infty\geq(\kappa^*-\kappa)^{\frac{1}{16}}\big\},\\
        E_3&\coloneqq\bigg\{\sup\limits_{0\leq s\leq T}\abs{W_s}\leq\frac{1}{(\kappa^*-\kappa)^{\frac{3}{4}}}\bigg\}.
    \end{aligned}\end{equation}
    Since $\kappa^*>\kappa$ and by $(3.30)$, we know that
    \begin{equation}
    \label{3.33}
        \mathbb{P}\big(E_1\big)=1-\frac{(\kappa^*-\kappa)^\frac{\nu}{8}}{a^{2\nu}}\;\;\;\text{and}\;\;\;\mathbb{P}\big(E_2\big)=1-\frac{(\kappa^*-\kappa)^\frac{\nu^*}{8}}{a^{2\nu^*}}.
    \end{equation}
    From (\cite{Ben}, $Cor.\;2.2$), we know the supremum Brownian motion $S_t$ admits the following distribution 
 \begin{equation*}
     \mathbb{P}\big(S_t\leq x\big)=2\Phi\bigg(\frac{x}{\sqrt{t}}\bigg)-1
 \end{equation*}
 for all $x\geq0$ and where $\frac{d}{dx}\Phi(x)\coloneqq e^{-x^2/2}/\sqrt{2\pi}$ is the density of standard normal variable. It follows from the reflection principle that\begin{equation}
 \label{3.35}
     1-\mathbb{P}\big(E_3\big)=2\mathbb{P}\big(S_T\geq(\kappa^*-\kappa)^{-\frac{3}{4}}\big)\leq2\sqrt{\frac{2}{\pi}}(\kappa^*-\kappa)^\frac{3}{4}\cdot e^{-\frac{1}{2T(\kappa^*-\kappa)^{3/2}}}.
 \end{equation}
 Choose $\alpha_1\coloneqq C(T,G)\frac{4T}{\kappa^2},\;\alpha_2\coloneqq C(T,G)\frac{32T^{\frac{5}{2}}}{\kappa^3},\;\alpha_3\coloneqq C(T,G)\frac{4Ta}{\kappa^2}$. It follows from Proposition $2.2$ and $(3.32)$ that on the event $E_1\cap E_2\cap E_3\subseteq\Omega$, we have the estimate
 \begin{equation*}\begin{aligned}
     &\norm{g(t,z)-g^*(t,z)}_{[0,T]\times G}\leq C(T,G)\sum\limits_{k=1}^2\norm{\lambda_k(t)-\lambda^*_k(t)}_{[0,T]}\\
     &\;\;\;\;\;\;\;\;\;\;\;\;\;\;\;\;\;\leq\alpha_1(\kappa^*-\kappa)^{1/8}+\alpha_2(\kappa^*-\kappa)^{1/4}+\alpha_3(\kappa^*-\kappa)^{7/8}.
 \end{aligned}\end{equation*}
 On the other hand, from $(3.33)$ and $(3.35)$ we have
 \begin{equation*}
     \mathbb{P}\big(E_1\cap E_2\cap E_3\big)\geq1-2\frac{(\kappa^*-\kappa)^\frac{\nu}{8}}{a^{2\nu}}-2(\kappa^*-\kappa)^\frac{3}{4}\cdot e^{-\frac{1}{2T(\kappa^*-\kappa)^{3/2}}}
 \end{equation*}
 where $a=a_1-a_2>0$. Hence, the result is verified.
\end{proof}
\begin{remark}
    Similar to the remark following Proposition $3.1$, the above result deals with random sets $H_T,K_T\subseteq\mathbb{H}$, which depend pathwisely on the forward Loewner chain.
\end{remark}
\begin{corollary}
  Suppose there is a sequence of forward Loewner chains $g^n_t(z): \mathbb{H}\backslash K^n_T\to \mathbb{H}$, with hulls $K_T^n$, induced by Dyson Brownian motions $\{\lambda^n_1(t),\lambda^n_2(t)\}$ with parameter $\kappa_n$ such that $\lim\limits_{n\to\infty}\kappa_n=\kappa \in (0, 4]$. Suppose $g_t(z):\mathbb{H}\backslash K_T\to\mathbb{H}$ is a forward Loewner chain induced by Dyson Brownian motion $\{\lambda_1(t),\lambda_2(t)\}$ with diffusivity parameter $\kappa \in (0,4]$. Then, for all $\epsilon>0$, there exists $N_\epsilon\in\mathbb{N}$ such that with $n>N_\epsilon$, we have the estimate $\mathbb{P}(g^n_t\xRightarrow{Cara}g_t)\geq1-\epsilon$ in the domain $z\in\mathbb{H}\backslash(K_T\cup_1^\infty K_T^n)$.
\end{corollary}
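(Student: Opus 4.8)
The plan is to run Theorem~$3.5$ along the sequence, with $\kappa^{*}$ replaced by $\kappa_{n}$, and then let $n\to\infty$. First I would remove the standing hypothesis $\kappa^{*}>\kappa$ of Theorem~$3.5$: split $\mathbb{N}$ into the indices with $\kappa_{n}\ge\kappa$ and those with $\kappa_{n}<\kappa$. On the first set the theorem applies verbatim with $(\kappa,\kappa^{*})=(\kappa,\kappa_{n})$; on the second set I would re-run its proof with the two parameters interchanged, that is, with $\kappa_{n}$ playing the role of the unperturbed diffusivity and $\kappa$ that of the perturbed one, which is legitimate since Lemma~$3.4$ is symmetric up to the sign of $\kappa^{*}-\kappa$ and $\abs{\kappa_{n}-\kappa}$ takes the place of $\kappa^{*}-\kappa$. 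Because $\kappa_{n}\to\kappa$ the prefactors $\alpha_{1},\alpha_{2},\alpha_{3}$ and the indices $\nu_{n}=\tfrac{4}{\kappa_{n}}-\tfrac12$ (with $\nu_n\ge\tfrac12$ since $\kappa_{n}\le4$) stay bounded, so in either case one obtains, for $n$ large, functions $\varphi_{n},\zeta_{n}$ of the form appearing in Theorem~$3.5$, with constants bounded uniformly in $n$, such that
\begin{equation*}
  \mathbb{P}\Big(\norm{g^{n}_{t}(z)-g_{t}(z)}_{[0,T]\times G}>\varphi_{n}(\abs{\kappa_{n}-\kappa})\ \text{for some compact }G\subseteq\mathbb{H}\backslash(K_{T}\cup_{1}^{\infty}K_{T}^{n})\Big)<\zeta_{n}(\abs{\kappa_{n}-\kappa}),
\end{equation*}
and, since $\abs{\kappa_{n}-\kappa}\to0$ and $\inf_{n}\nu_{n}\ge\tfrac12$, one has $\varphi_{n}(\abs{\kappa_{n}-\kappa})\to0$ and $\zeta_{n}(\abs{\kappa_{n}-\kappa})\to0$.

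Given $\epsilon>0$, I would then pick $N_{\epsilon}$ so large that $\zeta_{n}(\abs{\kappa_{n}-\kappa})<\epsilon$ for all $n>N_{\epsilon}$. For such $n$ the complement of the event above has probability at least $1-\epsilon$, and on it $\norm{g^{n}_{t}-g_{t}}_{[0,T]\times G}\le\varphi_{n}(\abs{\kappa_{n}-\kappa})$ holds simultaneously for every compact $G\subseteq\mathbb{H}\backslash(K_{T}\cup_{1}^{\infty}K_{T}^{n})$; as $\varphi_{n}(\abs{\kappa_{n}-\kappa})\to0$, this is exactly the statement $g^{n}_{t}\xRightarrow{Cara}g_{t}$ on that event, whence $\mathbb{P}(g^{n}_{t}\xRightarrow{Cara}g_{t})\ge1-\epsilon$. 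That $\mathbb{H}\backslash(K_{T}\cup_{1}^{\infty}K_{T}^{n})$ is a non-trivial open set is seen as in Corollary~$3.3$: the driving forces $\lambda^{n}_{k}$ stay within a fixed distance of the continuous, hence bounded, functions $\lambda_{k}$ on $[0,T]$, so all the hulls lie in a common compact subset of $\overline{\mathbb{H}}$, which in addition makes the constant $C(T,G)$ in Proposition~$2.2$ uniform in $n$.

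It is worth recording a cleaner, and in fact stronger, route that bypasses the event decomposition of Theorem~$3.5$. Estimate $(3.30)$ is in fact pathwise once $M^{*}_{t},M_{t}$ are read as the running infima $M^{n}_{T}=\inf_{0\le t\le T}X^{n}_{t}$ and $M_{T}=\inf_{0\le t\le T}X_{t}$ of the two gap processes; these are strictly positive almost surely because $d=1+\tfrac{8}{\kappa}\ge3$, and $\sup_{0\le s\le T}\abs{W_{s}}<\infty$ almost surely, so $(3.30)$ bounds $\norm{\lambda^{n}_{k}(t)-\lambda_{k}(t)}_{[0,T]}$ by a finite random constant times $\abs{\kappa_{n}-\kappa}^{1/2}$, provided that constant stays bounded in $n$. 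The latter reduces to the almost sure stability $M^{n}_{T}\to M_{T}>0$: the gap processes solve $dX^{n}_{t}=\tfrac{4}{\kappa_{n}}\tfrac{dt}{X^{n}_{t}}+dW_{t}$ with the same Wiener process $W_{t}$ and the same initial value $a$, so on the full-measure event $\{M_{T}>0\}$ a Gr\"onwall bootstrap — the drift $x\mapsto4/(\kappa_{n}x)$ is Lipschitz on $[m,\infty)$ for every $m>0$, and the parameter mismatch contributes only $\abs{\tfrac{1}{\kappa_{n}}-\tfrac{1}{\kappa}}\int_{0}^{T}X_{s}^{-1}\,ds<\infty$ — gives $\sup_{0\le t\le T}\abs{X^{n}_{t}-X_{t}}\to0$ and hence $M^{n}_{T}\to M_{T}$. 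Then $\norm{\lambda^{n}_{k}-\lambda_{k}}_{[0,T]}\to0$ almost surely, and Proposition~$2.2$ upgrades this to $\norm{g^{n}_{t}-g_{t}}_{[0,T]\times G}\to0$ almost surely for every compact $G\subseteq\mathbb{H}\backslash(K_{T}\cup_{1}^{\infty}K_{T}^{n})$, that is, $\mathbb{P}(g^{n}_{t}\xRightarrow{Cara}g_{t})=1$, which gives the claim for every $\epsilon$.

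The main difficulty is twofold. The minor point is the built-in asymmetry $\kappa^{*}>\kappa$ in Theorem~$3.5$, handled by the index split above. The substantive point is the passage from the per-$n$ probabilistic bounds of Theorem~$3.5$ to a statement about the whole sequence: in the first route one settles for the $\epsilon$-quantified reading of the conclusion, while the pathwise route needs the almost sure stability $M^{n}_{T}\to M_{T}$, whose crux is precisely that the limiting $(1+\tfrac{8}{\kappa})$-dimensional Bessel process, having $d\ge3$, keeps a strictly positive running infimum on the compact interval $[0,T]$, so that the singular drift $4/(\kappa x)$ is effectively Lipschitz along the trajectories under consideration and the standard SDE-stability argument applies.
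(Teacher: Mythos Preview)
The paper states this corollary without any proof; it is offered as an immediate consequence of Theorem~3.5, so there is no argument in the paper to compare against beyond ``apply Theorem~3.5 with $\kappa^*=\kappa_n$ and let $n\to\infty$.'' Your proposal is considerably more detailed than anything the paper supplies, and your second route in particular is both correct and strictly stronger than what is claimed.

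Your first route, however, has a genuine gap. Theorem~3.5 produces, for each fixed $n$, an event $E^{(n)}=E_1^{(n)}\cap E_2^{(n)}\cap E_3^{(n)}$ of probability at least $1-\zeta_n(|\kappa_n-\kappa|)$ on which $\norm{g^n-g}_{[0,T]\times G}\le\varphi_n(|\kappa_n-\kappa|)$. These events depend on $n$; sitting inside $E^{(n)}$ tells you nothing about the other terms of the sequence. The event $\{g^n_t\xRightarrow{Cara}g_t\}$ is a single tail event about the whole sequence (Definition~2.1), so you cannot pass from ``for each $n>N_\epsilon$, $\mathbb{P}(E^{(n)})\ge1-\epsilon$'' to ``$\mathbb{P}(g^n\xRightarrow{Cara}g)\ge1-\epsilon$'' without further input, e.g.\ a Borel--Cantelli argument with $\sum_n\zeta_n(|\kappa_n-\kappa|)<\infty$, which you do not arrange. (Note also that the corollary as stated is slightly awkward: since the event inside $\mathbb{P}$ does not depend on $n$, the quantifier ``for $n>N_\epsilon$'' is idle, and the conclusion is really $\mathbb{P}(g^n\xRightarrow{Cara}g)=1$.)

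Your second route repairs this cleanly and is the right way to read the corollary. Lemma~3.4 already gives $\sup_{0\le t\le T}|X^n_t-X_t|\le\tfrac{2\sqrt{T}}{\min(\kappa,\kappa_n)}|\kappa_n-\kappa|^{1/2}\to0$ almost surely, so you do not even need the Gr\"onwall bootstrap: this alone forces $M^n_T\to M_T>0$ a.s., after which the pathwise bound $(3.30)$ and Proposition~2.2 give $\norm{g^n-g}_{[0,T]\times G}\to0$ a.s.\ for every compact $G$ in the common domain. One small point worth tightening: the constant $C(T,G)$ in Proposition~2.2 depends on both chains through $\delta_1(G)=\inf_{z\in G,k}\Im g_k(T,z)$, so to get it uniform in $n$ you should either invoke the elementary lower bound $\Im g^n(t,z)\ge\sqrt{(\Im z)^2-4t}$ (valid whenever $\Im z>2\sqrt{T}$, independently of the driving force) or, for general $G$, use that $g^n(T,\cdot)\to g(T,\cdot)$ locally uniformly on the common domain once the drivers converge, which bounds $\delta_1(G)$ from below uniformly in $n$ a posteriori.
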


\subsection{Remarks on extension to multiple processes}
In the above two sub-sections we have discussed the stability of multiple $SLE_\kappa$ under perturbations of initial value and diffusivity parameter with $N=2$ Dyson Brownian motions. In this sub-section, we formalize the general $N\geq3$ case and point out further directions and remarks.\par
The general driving forces $\{\lambda_1(t),\lambda_2(t),\ldots,\lambda_N(t)\}$ are interacting Dyson Brownian motions given by
\begin{equation*}
    d\lambda_j(t)=\frac{1}{\sqrt{2}}dB_j(t)+\frac{2}{\kappa}\sum\limits_{1\leq k\leq N,k\neq j}\frac{1}{\lambda_j(t)-\lambda_k(t)}dt
\end{equation*}
with initial value $\lambda_j(0)=a_j$ in the higher dimensional Weyl chamber for all $j=1,\ldots, N$, where $B_1(t),\ldots,B_N(t)$ are independent one-dimensional Brownian motions. Here the trajectory of the forces $\{\lambda_1(t),\ldots,\lambda_N(t)\}$ can be seen as the time evolution of $N$ non-colliding particles with hardcore. Furthermore, we set diffusivity $\kappa\in(0,4]$ and for each $1\leq i\neq j\leq N$ and let $X^{i,j}_t=\lambda_i(t)-\lambda_j(t)$. Based on Section $1$ we know $X^{i,j}>0$ for all $t\in[0,T]$ almost surely whenever $i>j$. In other words, the particles never collide in the simple phase of $\kappa$. We further observe that
\begin{equation}
\label{3.39}
    dX^{i,j}_t=\frac{4}{\kappa}\cdot\frac{dt}{X^{i,j}_t}+\frac{2}{\kappa}\sum\limits_{k\neq i,j}\bigg(\frac{1}{X^{i,k}_t}-\frac{1}{X^{j,k}_t}\bigg)dt+dW^{i,j}_t
\end{equation}
with $X^{i,j}_0=a_i-a_j$ and where $W^{i,j}_t=\frac{1}{\sqrt{2}}\big(B_i(t)-B_j(t)\big)$ is a canonical Wiener process.\par
If the initial value $\{\lambda_1(0),\lambda_2(0),\ldots,\lambda_N(0)\}$ is under perturbation, we get a different set of Dyson Brownian motions. To be precise, choose a positive constant $0<\epsilon<\min\{\frac{1}{3}(a_j-a_i);\,1\leq i<j\leq N\}$ and select $b_j$ in the $\epsilon$-ball of $a_j$ for $j=1,\ldots,N$ to be the perturbed initial value of the Dyson Brownian motions $\{\eta_1(t),\eta_2(t),\ldots,\eta_N(t)\}$. Hence $\{b_1,b_2,\ldots,b_N\}$ stays in the Weyl chamber and 
\begin{equation*}
    d\eta_j(t)=\frac{1}{\sqrt{2}}dB_j(t)+\frac{2}{\kappa}\sum\limits_{1\leq k\leq N,k\neq j}\frac{1}{\eta_j(t)-\eta_k(t)}dt
\end{equation*}
with initial value $\eta_j(0)=b_j$ for all $j$. Let $Y^{i,j}_t=\eta_i(t)-\eta_j(t)$ for all $i\neq j$ and we see that
\begin{equation}
\label{3.41}
    dY^{i,j}_t=\frac{4}{\kappa}\cdot\frac{dt}{Y^{i,j}_t}+\frac{2}{\kappa}\sum\limits_{k\neq i,j}\bigg(\frac{1}{Y^{i,k}_t}-\frac{1}{Y^{j,k}_t}\bigg)dt+dW^{i,j}_t
\end{equation}
with $Y^{i,j}_0=b_i-b_j$. Observe from $(3.39)$ and $(3.41)$ that $X^{i,j}_t$ and $Y^{i,j}_t$ are no longer Bessel processes due to the interaction terms.\par
To understand the behavior of the forward Loewner chain under the perturbation of initial value, we must look into the process $X^{i,j}_t-Y^{i,j}_t$ for each $i\neq j$. Indeed, we see that
\begin{equation*}
    d(X^{i,j}_t-Y^{i,j}_t)=-\frac{4}{\kappa}\cdot\frac{X^{i,j}_t-Y^{i,j}_t}{X^{i,j}_tY^{i,j}_t}dt-\frac{2}{\kappa}\sum\limits_{k\neq i,j}\bigg(\frac{X^{i,k}_t-Y^{i,k}_t}{X^{i,k}_tY^{i,k}_t}-\frac{X^{j,k}_t-Y^{j,k}_t}{X^{j,k}_tY^{j,k}_t}\bigg)dt.
\end{equation*}
If for all $i\neq j$ and $t\in[0,T]$ we have $X^{i,j}_t-Y^{i,j}_t\neq0$, then define 
\begin{equation*}
    Q^{i,j}_t=\sum\limits_{k\neq i,j}\bigg(\frac{1}{X^{i,k}_tY^{i,k}_t}\cdot\frac{X^{i,k}_t-Y^{i,k}_t}{X^{i,j}_t-Y^{i,j}_t}-\frac{1}{X^{j,k}_tY^{j,k}_t}\cdot\frac{X^{j,k}_t-Y^{j,k}_t}{X^{i,j}_t-Y^{i,j}_t}\bigg).
\end{equation*}
Denote by $a_{ij}=a_i-a_j$ and $b_{ij}=b_i-b_j$. Then we observe
\begin{equation*}
    X^{i,j}_t-Y^{i,j}_t=(a_{ij}-b_{ij})\cdot\exp\bigg(-\frac{4}{\kappa}\int_0^t\frac{1}{X^{i,j}_sY^{i,j}_s}ds-\frac{2}{\kappa}\int_0^tQ^{i,j}_sds\bigg).
\end{equation*}
According to Proposition $2,2$, it would be enough to understand the perturbation $\lambda_j(t)-\eta_j(t)$ for each $j$. Since
\begin{equation*}\begin{aligned}
    d\big(\lambda_j(t)-\eta_j(t)\big)&=\frac{2}{\kappa}\sum\limits_{1\leq k\leq N,k\neq j}\bigg(\frac{1}{\lambda_j(t)-\lambda_k(t)}-\frac{1}{\eta_j(t)-\eta_k(t)}\bigg)dt\\
    &=-\frac{2}{\kappa}\sum\limits_{1\leq k\leq N,k\neq j}\frac{X^{j,k}_t-Y^{j,k}_t}{X^{j,k}_tY^{j,k}_t}dt, 
\end{aligned}\end{equation*}
we know that it would be enough to understand the perturbation induced by the process $Q^{i,j}_t$. However, to have a more quantitative picture, it is necessary to investigate the stochastic differential equations $(3.39)$ and $(3.41)$, which we leave in future projects. And the perturbation induced by the diffusivity parameter $\kappa$ with $N\geq3$ Dyson Brownian motions could be analysed similarly, which we also leave in future projects.
%---------------------------------------------
%---------------------------------------------

\section{Variant estimate on the Hausdorff distance}
In this section, we prove a variant pathwise perturbation estimate on the Hausdorff distance of the hulls $K_T$ induced by the forward Loewner chain. When $\kappa$ is in $(0,4]$, the multiple $SLE_\kappa$ curves are almost surely simple and non-intersecting. The result in this section serves as a motivation to understand the Hausdorff distance convergence following the analysis of the Carathéodory convergence. In general, the topology induced by Hausdorff distance is stronger than the topology induced by Carathéodory convergence.

We analyze the Hausdorff convergence under assumptions on an upper bound of the modulus of the $z$-derivative of the Loewner map. The method follows the one-curve strategy from (\cite{Onthe}, $Lem.\,8.2$). First, we define the notion of Hausdorff distance. For any two compacts $A,B\subseteq\mathbb{C}$, define their Hausdorff distance (\cite{Alex}, $Sec.\;6.1$) by
\begin{equation*}
    d_H(A,B)\coloneqq\inf\bigg\{\epsilon>0;\,A\subseteq\bigcup\limits_{z\in B}\mathcal{B}(z,\epsilon),\,B\subseteq\bigcup\limits_{z\in A}\mathcal{B}(z,\epsilon)\bigg\},
\end{equation*}
where $\mathcal{B}(z,\epsilon)$ is the $\epsilon$-ball centered at $z\in\mathbb{C}$.\par
Going back to the $N$-curve case. First consider a forward Loewner chain $g_t(z):\mathbb{H}\backslash K_T\to\mathbb{H}$ driven by forces $t\mapsto\big(\lambda_1(t),\ldots,\lambda_N(t)\big)$. Denote the inverse map corresponding to $g_t(z)$ by $f_t(z):\mathbb{H}\to\mathbb{H}\backslash K_T$, with $g_t\big(f_t(z)\big)=z$, for all $z\in\mathbb{H}$. On the other hand, consider the backward Loewner chain generated by the time-reversed forces $t\mapsto\big(\lambda_1(T-t),\ldots,\lambda_N(T-t)\big)$. Denote this forward Loewner chain by $h_t(z)$ for $z\in\mathbb{H}$. Then it satisfies
\begin{equation*}
    \partial_th_t(z)=\frac{1}{N}\sum\limits_{j=1}^N\frac{-2}{h_t(z)-\lambda_j(T-t)}
\end{equation*}
with $h_0(z)=z$ for all $z\in\mathbb{H}$. Similar to the $N=1$ case as in (\cite{Alex2}, $Sec.\;2.$), we could verify that $f_T(z)=h_T(z)$ for all $z\in\mathbb{H}$. When the system is under perturbation, we need to compare a Loewner chain with its perturbed counterpart. Indeed, denote by $f_k(t,z)$ and $g_k(t,z)$ the Loewner chains driven by $\{V_{k,1}(t),\ldots,V_{k,N}(t)\}$ for $k=1,2$. Denote by $h_k(t,z)$ the backward Loewner chains driven by $\{V_{k,1}(T-t),\ldots,V_{k,N}(T-t)\}$ for $k=1,2$.
The following lemma estimates pathwisely the backward Loewner chain.
\begin{lemma}
    For all $\delta>0$, there exists a constant $C(\delta,T)=\sqrt{1+4T/\delta^2}$ such that whenever $\Im z\geq\delta$, we have
    \begin{equation*}
        \abs{h_1(T,z)-h_2(T,z)}\leq C(\delta,T)\sum\limits_{j=1}^N\norm{V_{1,j}(T-t)-V_{2,j}(T-t)}_{[0,T]}.
    \end{equation*}
\end{lemma}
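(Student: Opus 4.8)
\emph{Proof plan.} The idea is to transport the Grönwall/integrating-factor computation from the proof of Proposition~2.2 to the time-reversed flow; the only genuinely new input is a growth bound for $\Im h_k$ along the backward chain, and it is precisely this bound that produces the explicit constant $\sqrt{1+4T/\delta^{2}}$. Fix $z\in\mathbb{H}$ with $\Im z\geq\delta$ and set $\psi(t)\coloneqq h_1(t,z)-h_2(t,z)$; since both backward chains start from the same point $z$ we have $\psi(0)=0$, which simplifies matters compared with Proposition~2.2. With $D_j(t)\coloneqq V_{1,j}(T-t)-V_{2,j}(T-t)$ and
\[
  \tilde\xi_j(t)\coloneqq\frac{2}{\big(h_1(t,z)-V_{1,j}(T-t)\big)\big(h_2(t,z)-V_{2,j}(T-t)\big)},
\]
the same algebra as in $(2.3)$ gives $\frac{d}{dt}\psi(t)=\frac1N\sum_{j=1}^N\tilde\xi_j(t)\big(\psi(t)-D_j(t)\big)$. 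Multiplying by the integrating factor $\exp\!\big(-\frac1N\sum_j\int_0^t\tilde\xi_j(s)\,ds\big)$, integrating over $[0,T]$ and using $\psi(0)=0$ would then give $\psi(T)=-\frac1N\sum_j\int_0^T\tilde\xi_j(u)D_j(u)\exp\!\big(\frac1N\sum_{j'}\int_u^T\tilde\xi_{j'}(s)\,ds\big)\,du$; estimating $|\exp(\cdot)|\leq\exp\!\big(\frac1N\sum_{j'}\int_u^T|\tilde\xi_{j'}|\,ds\big)$ and integrating the resulting total derivative in $u$, one reaches $|\psi(T)|\leq\big(\sum_j\norm{D_j}_{[0,T]}\big)\big(\exp(\frac1N\sum_j\int_0^T|\tilde\xi_j|\,ds)-1\big)$, exactly as in Proposition~2.2.

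It then suffices to show $\exp\!\big(\frac1N\sum_{j=1}^N\int_0^T|\tilde\xi_j(s)|\,ds\big)\leq\sqrt{1+4T/\delta^{2}}$. Here I would use Cauchy--Schwarz twice: first in the time variable, $\int_0^T|\tilde\xi_j|\,ds\leq\sqrt{\tilde I_{1,j}\tilde I_{2,j}}$ with $\tilde I_{k,j}\coloneqq\int_0^T 2\,|h_k(s,z)-V_{k,j}(T-s)|^{-2}\,ds$; and then over the index $j$, $\frac1N\sum_j\sqrt{\tilde I_{1,j}\tilde I_{2,j}}\leq\frac1N\sqrt{\big(\sum_j\tilde I_{1,j}\big)\big(\sum_j\tilde I_{2,j}\big)}$ — this second step is what lets the factor $N$ cancel, giving a square root rather than an $N/2$-power. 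The backward-flow input is then: since the $V_{k,j}$ are real, $\partial_t\Im h_k(t,z)=\frac1N\sum_j 2\,\Im h_k(t,z)\,|h_k(t,z)-V_{k,j}(T-t)|^{-2}\geq 0$, so dividing by $\Im h_k$ and integrating yields the identity $\sum_j\tilde I_{k,j}=N\log\!\big(\Im h_k(T,z)/\Im z\big)$, while $\partial_t(\Im h_k(t,z))^2=\frac4N\sum_j(\Im h_k(t,z))^2\,|h_k(t,z)-V_{k,j}(T-t)|^{-2}\leq 4$, hence $(\Im h_k(T,z))^2\leq(\Im z)^2+4T$. Since $\Im z\geq\delta$ and $y\mapsto\sqrt{y^2+4T}/y$ is decreasing, $\Im h_k(T,z)/\Im z\leq\sqrt{1+4T/\delta^{2}}$, and therefore $\sum_j\tilde I_{k,j}\leq\frac N2\log(1+4T/\delta^{2})$ for $k=1,2$.

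Putting everything together, $\frac1N\sum_j\int_0^T|\tilde\xi_j|\,ds\leq\sqrt{\log\tfrac{\Im h_1(T,z)}{\Im z}\,\log\tfrac{\Im h_2(T,z)}{\Im z}}\leq\frac12\log(1+4T/\delta^{2})$, so $\exp(\frac1N\sum_j\int_0^T|\tilde\xi_j|\,ds)\leq\sqrt{1+4T/\delta^{2}}=C(\delta,T)$, and hence $|h_1(T,z)-h_2(T,z)|\leq(C(\delta,T)-1)\sum_j\norm{D_j}_{[0,T]}\leq C(\delta,T)\sum_{j=1}^N\norm{V_{1,j}(T-t)-V_{2,j}(T-t)}_{[0,T]}$, as claimed. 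The one substantive novelty relative to Proposition~2.2 is the growth estimate $(\Im h_k(T,z))^2\leq(\Im z)^2+4T$: along the backward flow $\Im h_k$ increases rather than decreases, so the inequality replacing the forward bound $(\Im g_t)^2\geq(\Im z)^2-4t$ must run the other way; I expect the only place needing real care is arranging the two Cauchy--Schwarz steps so that the constant comes out exactly as $\sqrt{1+4T/\delta^{2}}$ and not a larger power of $1+4T/\delta^2$.
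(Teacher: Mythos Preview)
Your proposal is correct and follows essentially the same route as the paper, whose proof is just the one-line hint ``similar to Proposition~2.2; take $I_{k,j}\leq \log\frac{\Im h_k(t,z)}{\Im z}$'': you have faithfully unpacked that hint, supplying the backward-flow growth estimate $(\Im h_k(T,z))^2\leq(\Im z)^2+4T$ that produces the explicit constant. Your second Cauchy--Schwarz over the index $j$ is a small refinement over Proposition~2.2's cruder bound $\tilde I_{k,j}\leq\sum_{j'}\tilde I_{k,j'}=N\log(\Im h_k(T,z)/\Im z)$, but both routes land on the same $\sqrt{1+4T/\delta^2}$.
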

\begin{proof}
    The proof is similar to Proposition 2.2. Take $I_{k,j}\leq \log\frac{\Im h_k(t,z)}{\Im z}.$
\end{proof}
We also need the following  Koebe
distortion theorem, see ([14] $Lem.\;2.1$)
\begin{lemma}
    Let $D$ be a simply connected domain and assume $f:D\to\mathbb{C}$ is conformal map. Let $d=\text{dist}(z,\partial D)$ for $z\in D$. If $\abs{z-w}\leq rd$ for some $0<r<1$, then
    \begin{equation*}
        \frac{\abs{f^\prime(z)}}{(1+r)^2}\abs{z-w}\leq\abs{f(z)-f(w)}\leq\frac{\abs{f^\prime(z)}}{(1-r)^2}\abs{z-w}.
    \end{equation*}
\end{lemma}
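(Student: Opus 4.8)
The plan is to reduce the statement to the classical growth theorem for normalized univalent functions on the unit disk. Fix $z\in D$ and set $d=\mathrm{dist}(z,\partial D)>0$; then the open disk $\mathcal{B}(z,d)$ is contained in $D$, so $f$ restricts to a conformal, in particular injective, map on $\mathcal{B}(z,d)$. Define
\begin{equation*}
F(\zeta)\coloneqq\frac{f(z+d\zeta)-f(z)}{d\,f'(z)},\qquad \zeta\in\mathbb{D},
\end{equation*}
which is well defined since $f'$ never vanishes. Then $F$ is univalent on $\mathbb{D}$ with $F(0)=0$ and $F'(0)=1$, i.e.\ $F$ lies in the standard class $\mathcal{S}$ of schlicht functions.

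For the second step I would invoke the growth theorem: for every $F\in\mathcal{S}$ and $|\zeta|=s<1$,
\begin{equation*}
\frac{s}{(1+s)^2}\le|F(\zeta)|\le\frac{s}{(1-s)^2}.
\end{equation*}
Apply this with $\zeta=(w-z)/d$, so that $s=|w-z|/d\le r<1$. Since both $s\mapsto s/(1+s)^2$ and $s\mapsto s/(1-s)^2$ are increasing on $[0,1)$, one may enlarge the denominators: $s/(1+s)^2\ge s/(1+r)^2$ and $s/(1-s)^2\le s/(1-r)^2$. Unwinding the definition of $F$ gives $|F(\zeta)|=|f(w)-f(z)|/(d\,|f'(z)|)$ and $d\,s=|w-z|$, so multiplying through by $d\,|f'(z)|$ yields exactly
\begin{equation*}
\frac{|f'(z)|}{(1+r)^2}\,|z-w|\le|f(z)-f(w)|\le\frac{|f'(z)|}{(1-r)^2}\,|z-w|,
\end{equation*}
as claimed.

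It remains to supply, or simply cite, the growth theorem itself. The route I would take is the classical one: first establish the distortion bounds $\tfrac{1-s}{(1+s)^3}\le|F'(\zeta)|\le\tfrac{1+s}{(1-s)^3}$ by applying the Bieberbach coefficient bound $|a_2|\le 2$ to the Koebe transforms $\omega\mapsto\big(F(\tfrac{\omega+\zeta}{1+\bar\zeta\omega})-F(\zeta)\big)\big/\big((1-|\zeta|^2)F'(\zeta)\big)$, which again belong to $\mathcal{S}$; then integrate the upper distortion bound along the segment $[0,\zeta]$ for the upper growth bound, and, for the lower growth bound, integrate the lower distortion bound along the preimage of the segment $[0,F(\zeta)]$ together with the Koebe $1/4$-theorem. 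This is a textbook computation, so in the paper it is legitimate simply to cite it, as is done here. There is no genuine obstacle; the only points deserving a word of care are the inclusion $\mathcal{B}(z,d)\subseteq D$, which gives univalence of $F$ on all of $\mathbb{D}$, and the monotonicity step that lets one replace $s$ by $r$ in the final estimate.
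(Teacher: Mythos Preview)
Your argument is correct and is exactly the standard route to this inequality: normalize to a schlicht function on $\mathbb{D}$ and invoke the Koebe growth theorem. The paper itself does not give a proof at all; it simply cites the result from the literature (reference [14], Lemma~2.1), so there is no in-paper argument to compare against, and your write-up supplies precisely the details one would expect behind that citation.
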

\begin{proposition}
    Let $g_k(t,k):[0,T]\times\mathbb{H}\backslash K_{k,t}$ be two forward Loewner chains driven by forces $t\mapsto\big(V_{k,1}(t),\ldots,V_{k,N}(t)\big)$ with hulls $K_{k,t}$, for $k=1,2$. Let $f_k(t,z)$ be their inverse so that $g_k\big(t,f_k(t,z)\big)=z$. Write $f_k(z)\coloneqq f_k(T,z)$, for $k=1,2$. Suppose that 
    \begin{equation*}
        \sum\limits_{j=1}^N\sup\limits_{0\leq t\leq T}\abs{V_{1,j}(t)-V_{2,j}(t)}<\epsilon,
    \end{equation*}
    where $\epsilon>0$ is taken sufficiently small. Suppose further there exists $\theta \in(0,1)$ such that for all $\zeta\in\mathbb{R}$, we have 
    \begin{equation}
    \label{4.6}
        \abs{f_1^\prime(\zeta+i\delta)}\leq\delta^{-\theta}
    \end{equation}
    for all $\delta\leq4\sqrt{T\epsilon}$. Then, we have the Hausdorff metric estimate
    \begin{equation*}
        d_H(K_{1,T}\cup\mathbb{R},K_{2,T}\cup\mathbb{R})\leq8(T\epsilon)^\frac{1-\theta}{2}+3\sqrt{\epsilon(1+\epsilon)}.
    \end{equation*}
\end{proposition}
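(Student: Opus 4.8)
The plan is to interpolate between $K_{1,T}\cup\mathbb{R}$ and $K_{2,T}\cup\mathbb{R}$ through the two level curves $\Gamma_k:=f_k(\{\Im z=\delta\})\cup\mathbb{R}$ at the single well-chosen height $\delta:=4\sqrt{T\epsilon}$. Since for $\kappa\in(0,4]$ the hull $K_{k,T}$ is a finite union of pairwise disjoint simple curves, the map $f_k=f_k(T,\cdot)=h_k(T,\cdot)$ extends continuously to $\overline{\mathbb{H}}$ and carries $\mathbb{R}$ onto $\partial(\mathbb{H}\setminus K_{k,T})=K_{k,T}\cup\mathbb{R}$; in particular $K_{k,T}\cup\mathbb{R}=\overline{f_k(\mathbb{R})}$. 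By the triangle inequality for $d_H$ it then suffices to bound $d_H(K_{k,T}\cup\mathbb{R},\Gamma_k)$ for $k=1,2$ together with $d_H(\Gamma_1,\Gamma_2)$.

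For the middle term, the backward Loewner estimate stated above, applied at $\Im z=\delta$ and uniformly in $\zeta\in\mathbb{R}$ (using $\sum_j\|V_{1,j}-V_{2,j}\|_{[0,T]}<\epsilon$ and $h_k(T,\cdot)=f_k$), gives $\sup_{\zeta\in\mathbb{R}}|f_1(\zeta+i\delta)-f_2(\zeta+i\delta)|\le C(\delta,T)\epsilon$ with $C(\delta,T)=\sqrt{1+4T/\delta^2}$; as the parts of $\Gamma_1,\Gamma_2$ lying on $\mathbb{R}$ coincide, this yields $d_H(\Gamma_1,\Gamma_2)\le C(\delta,T)\epsilon$, and the choice $\delta=4\sqrt{T\epsilon}$ makes $C(\delta,T)\epsilon=\tfrac12\sqrt{\epsilon(1+4\epsilon)}\le\sqrt{\epsilon(1+\epsilon)}$.

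For $d_H(K_{k,T}\cup\mathbb{R},\Gamma_k)$ the point $f_k(\zeta+i\delta)$ lies in $\mathbb{H}\setminus K_{k,T}$ at conformal distance $\delta$ from the boundary, so the Koebe distortion theorem above (in its boundary/Schwarz form) gives $\mathrm{dist}(f_k(\zeta+i\delta),K_{k,T}\cup\mathbb{R})\le\delta\,|f_k'(\zeta+i\delta)|$, and summing the distortion inequality dyadically along $\{\zeta+is:0\le s\le\delta\}$ (with ratio $r=\tfrac12$, keeping the bound admissible) gives $|f_k(\zeta)-f_k(\zeta+i\delta)|\le c_\theta\,\delta^{1-\theta}$; both steps rest on a bound of the form $|f_k'(\zeta+is)|\le s^{-\theta}$ for $s\le\delta$. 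For $k=1$ this is exactly the hypothesis $|f_1'(\zeta+i\delta)|\le\delta^{-\theta}$ (equation \eqref{4.6}). For $k=2$ one writes $f_2=f_1+(f_2-f_1)$ and controls $(f_2-f_1)'$ by Cauchy's formula on the circle of radius $s/2$ about $\zeta+is$ — which stays in $\{\Im z\ge s/2\}$, where the backward estimate bounds $f_1-f_2$ by $C(s/2,T)\epsilon$ — obtaining in particular $\delta\,|f_2'(\zeta+i\delta)|\le\delta^{1-\theta}+2\sqrt{\epsilon(1+\epsilon)}$; the vertical estimate for $f_2$ is then obtained by routing through the $f_1$ vertical bound and the two height-$\delta$ comparisons just made. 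Collecting the three contributions, using $\delta^{1-\theta}=4^{1-\theta}(T\epsilon)^{(1-\theta)/2}\le4(T\epsilon)^{(1-\theta)/2}$ and absorbing each $C(\cdot,T)\epsilon$ into $\sqrt{\epsilon(1+\epsilon)}$, produces $d_H(K_{1,T}\cup\mathbb{R},K_{2,T}\cup\mathbb{R})\le8(T\epsilon)^{(1-\theta)/2}+3\sqrt{\epsilon(1+\epsilon)}$.

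The main obstacle is the tension between the two error sources. The backward Loewner estimate has constant $\sqrt{1+4T/\delta^2}$ blowing up as the comparison height $\delta\to0$, so one cannot compare $f_1$ and $f_2$ all the way down to $\mathbb{R}$: the comparison must be truncated at a positive height $\delta$, and the vertical gap between the level curve $\Gamma_k$ and the hull then costs a Hölder-type term $\sim\delta^{1-\theta}$; balancing $\delta^{1-\theta}$ against $\sqrt T\,\epsilon/\delta$ forces $\delta\asymp\sqrt{T\epsilon}$, which is the source of the exponent $(1-\theta)/2$. A second, related difficulty is that the derivative bound is assumed only for $f_1$: transferring it to $f_2$ via Cauchy's estimate is clean at the fixed height $\delta$, where $f_1-f_2$ is genuinely small, but it deteriorates at smaller heights, so the vertical control of $f_2$ needed for the "$K_{2,T}\cup\mathbb{R}$ close to $\Gamma_2$" direction must be obtained indirectly through $f_1$ and the level-curve comparison rather than from a direct estimate on $f_2'$ near the real line.
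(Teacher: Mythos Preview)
Your proposal is correct and follows essentially the same approach as the paper: both arguments fix the comparison height $\delta_0=4\sqrt{T\epsilon}$, use Lemma~4.1 to bound $|f_1-f_2|$ on $\{\Im z\ge\delta_0/2\}$, transfer the derivative bound from $f_1'$ to $f_2'$ at height $\delta_0$ via Cauchy's integral formula, and invoke Koebe distortion for the vertical displacement $|f_k(\zeta+i0^+)-f_k(\zeta+i\delta_0)|$. The only difference is packaging: you route the Hausdorff estimate through the intermediate level curves $\Gamma_k=f_k(\{\Im z=\delta_0\})\cup\mathbb{R}$ and the triangle inequality for $d_H$, whereas the paper works point-by-point, starting from $w=f_1(\zeta+i0^+)\in\widehat K_1$, moving up to $f_1(\zeta+i\delta_0)$, across to $f_2(\zeta+i\delta_0)$, and then to the nearest $\widehat w\in\widehat K_2$; the three legs of that chain are exactly your three $d_H$ terms. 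Your discussion of the second difficulty (that $|f_2'|$ is only controlled at height $\delta_0$, not below) is also handled the same way in the paper: it applies Koebe with $f_2'(\zeta+i\delta_0)$ directly and bounds that single value by $|f_1'(\zeta+i\delta_0)|+|f_1'-f_2'|$, rather than attempting any estimate on $f_2'$ at smaller heights.
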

\begin{proof}
    Denote by $h_k(t,z)$ the time-reversed Loewner chains driven by $\{V_{k,1}(T-t),\ldots,V_{k,N}(T-t)\}$ for $k=1,2$. Based on Lemma $4.1$ and the observation that $f_k(z)=h_k(T,z)$, we know
    \begin{equation*}
        \abs{f_1(z)-f_2(z)}\leq\epsilon\cdot\sqrt{1+4T/\delta^2}
    \end{equation*}
    whenever $\Im z\geq\delta$. Take $\delta_0=4\sqrt{T\epsilon}$, we have
    \begin{equation*}
        \sup\limits_{\Im z\geq\frac{\delta_0}{2}}\abs{f_1(z)-f_2(z)}\leq\sqrt{\epsilon(1+\epsilon)}.
    \end{equation*}
    Hence, Cauchy's integral formula implies
    \begin{equation*}\begin{aligned}
        \sup\limits_{\Im z\geq\delta_0}\abs{f_1^\prime(z)-f_2^\prime(z)}&\leq\sqrt{\epsilon(1+\epsilon)}\sup\limits_{\Im z\geq\frac{\delta_0}{2}}\frac{1}{2\pi i}\oint_{\partial\mathcal{B}(z,\delta_0/2)}\frac{d\zeta}{\abs{z-\zeta}^2}\\
        &\leq\sqrt{\frac{1+\epsilon}{4T}}.
    \end{aligned}\end{equation*}
    For notational convenience, we write $\widehat{K}_k\coloneqq K_{k,T}\cup\mathbb{R}$, for $k=1,2$. Fix $\zeta\in\mathbb{R}$, by Lemma $4.2$, we have
    \begin{equation*}
        \abs{f_1(\zeta+i0^+)-f_1(\zeta+i\delta)}\leq\delta\cdot\abs{f_1^\prime(\zeta+i\delta)}\leq\delta^{1-\theta}\leq(16T\epsilon)^\frac{1-\theta}{2}.
    \end{equation*}
    Hence, we have
    \begin{equation*}
        f_1\big(\{\Im z\leq\delta_0\}\big)\subseteq\bigcup\limits_{z\in\widehat{K}_1}\mathcal{B}\big(z,(16T\epsilon)^\frac{1-\theta}{2}\big).
    \end{equation*}
    We have that 
    \begin{equation*}
        \widehat{K}_2\subseteq f_2\big(\{\Im z\leq\delta_0\}\big).
    \end{equation*}
    For the above fixed $\zeta\in\mathbb{R}$, write $w\coloneqq f_1(\zeta+i0^+)\in\widehat{K}_1$. Choose $\widehat{w}\in\widehat{K}_2$ be the point in $\widehat{K}_2$ nearest to $f_2(\zeta+i\delta_0)$.
    
    % Notice that this point is admissible in $\widehat{K}_2$, because $K_{2,T}\subset\mathbb{H}$ is compact. 
    
    By Lemma $4.2$ again
    \begin{equation*}\begin{aligned}
        \abs{\widehat{w}-f_2(\zeta+i\delta_0)}&\leq\abs{f_2(\zeta+i0^+)-f_2(\zeta+i\delta_0)}\\
        &\leq\delta_0\cdot\abs{f_2^\prime(\zeta+i\delta_0)}\\
        &\leq\delta_0\cdot\bigg(\abs{f_1^\prime(\zeta+i\delta_0)}+\abs{f_1^\prime(\zeta+i\delta_0)-f_2^\prime(\zeta+i\delta_0)}\bigg)\\
        &\leq(16T\epsilon)^{\frac{1-\theta}{2}}+\sqrt{4\epsilon(1+\epsilon)}.
    \end{aligned}\end{equation*}
    Hence, we see that
    \begin{equation*}\begin{aligned}
        \abs{w-\widehat{w}}&\leq\abs{w-f_1(\zeta+i\delta_0)}+\abs{f_1(\zeta+i\delta_0)-f_2(\zeta+i\delta_0)}+\abs{\widehat{w}-f_2(\zeta+i\delta_0)}\\
        &\leq(16T\epsilon)^{\frac{1-\theta}{2}}+\sqrt{\epsilon(1+\epsilon)}+(16T\epsilon)^{\frac{1+\theta}{2}}+\sqrt{4\epsilon(1+\epsilon)}\\
        &\leq8(T\epsilon)^\frac{1-\theta}{2}+3\sqrt{\epsilon(1+\epsilon)}.
    \end{aligned}\end{equation*}
    Hence the result is verified.
\end{proof}
% \begin{remark}
%     The above proposition assumes the continuity of $f_t(z)$ up to the boundary of $\mathbb{H}$, which is discussed in ([15] $Thm.4.1$).
% \end{remark}
% \begin{remark}
%     The above proposition uniform boundedness w.r.t. the absolute value of the derivative of $f_t(z)$ on a horizontal line in $\mathbb{H}$. We expect this type of property to hold when the forward Loewner map is generated by Dyson Brownian motions. Then the Hausdorff metric can be used to estimate perturbation of the hulls $K_T$.
% \end{remark}
\begin{remark}
    The upper bound for the derivative modulus in $(\ref{4.6})$ may seem superfluous. But we could estimate the probability of which $(\ref{4.6})$ holds using a multi-dimensional variant argument as in (\cite{ooo}, $Cor.\;3.5$)
\end{remark}

\textbf{Acknowledgements.} We kindly thank Henri Elad Altman for his valuable comments on analyzing stochastic processes and to Vivian Olsiewski Healey and Dmitry Beliaev for useful discussions and for looking over the previous versions of this manuscript. We also kindly thank Andrew Swan, and Lukas Schoug for helpful discussions. V.M. acknowledges the support of the NYU-ECNU Institute of Mathematical Sciences at NYU Shanghai.

\bibliographystyle{plain}
\bibliography{literature}
\begin{spacing}{1}

\end{spacing}
\end{document}